\newtheorem{theorem}{Theorem}[section]
\newtheorem{lemma}[theorem]{Lemma}
\newtheorem{cor}[theorem]{Corollary}
\theoremstyle{definition}
\newcounter{tenumerate}
\def\P{\mathbb{P}}
\newcommand{\one}{\1}
\newcommand{\deq}{\stackrel{\scriptscriptstyle\triangle}{=}}
\renewcommand{\epsilon}{\varepsilon}
\newcommand{\1}{\mathbf{1}}
\DeclareMathOperator{\var}{Var}
\newcommand{\N}{{\mathbb N}}
\newcommand{\E}{{\mathbb E}}
\newcommand{\remove}[1]{}
\renewcommand{\leq}{\leqslant}
\renewcommand{\geq}{\geqslant}
\newcommand{\cov}{\mathrm{Cov}}
\def\XXint#1#2#3{{\setbox0=\hbox{$#1{#2#3}{\int}$}
\vcenter{\hbox{$#2#3$}}\kern-.5\wd0}}
\begin{document}

\title{{\bf Exponential and double exponential tails for maximum of two-dimensional discrete Gaussian free field}}
\author{Jian Ding\footnote{Most of the work was carried out when the author was supported partially by Microsoft Research.}\\
Stanford University}
\date{}

\maketitle

\begin{abstract}
We study the tail behavior for the maximum of discrete Gaussian free
field on a 2D box with Dirichlet boundary condition after centering
by its expectation. We show that it exhibits an exponential decay
for the right tail and a double exponential decay for the left tail.
In particular, our result implies that the variance of the maximum
is of order 1, improving an $o(\log n)$ bound by Chatterjee (2008)
and confirming a folklore conjecture. An important ingredient for
our proof is a result of Bramson and Zeitouni (2010), who proved the
tightness of the centered maximum together with an evaluation of the
expectation up to an additive constant.
\end{abstract}

\section{Introduction}
Denote by $A_n \subset \mathbb{Z}^2$ a box of side length $n$, i.e., $A = \{(x, y)\in \mathbb{Z}^2: 0\leq x, y\leq n\}$, and let $\partial A_n = \{v\in A_n: \exists u\in \mathbb{Z}^2 \setminus A_n: v\sim u\}$. The discrete Gaussian free field (GFF) $\{\eta_v: v\in A_n\}$ on $A_n$ with Dirichlet boundary condition, is then defined to be a mean zero Gaussian
process which takes value 0 on $\partial A_n$ and satisfies the
following Markov field condition for all $v\in A_n\setminus \partial
A_n$: $\eta_v$ is distributed as a Gaussian variable with variance $1$
and mean equal to the average over the neighbors given the GFF on
$A_n\setminus \{v\}$ (see later for a definition of GFF using Green
functions). Throughout the paper, we use the notation
\begin{equation}\label{eq-def-M-n}
M_n = \sup_{v\in A_n} \eta_v\,.
\end{equation}

We prove the following tail behavior for $M_n$.
\begin{theorem}\label{thm-concentration}
There exist absolute constants $C,c>0$ so that for all $n\in
 \N$  and  $0\leq \lambda\leq (\log n)^{2/3}$
\begin{align*}
c\mathrm{e}^{-C\lambda} &\leq \P(M_n \geq \E M_n + \lambda) \leq
C\mathrm{e}^{-c\lambda}\\
 c\mathrm{e}^{-C \mathrm{e}^{C\lambda}}&\leq \P(M_n \leq \E M_n - \lambda)
\leq C \mathrm{e}^{-c\mathrm{e}^{c\lambda}}\,.
\end{align*}
\end{theorem}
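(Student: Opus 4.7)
My starting point is Bramson--Zeitouni's tightness of $\{M_n - \E M_n\}$ together with their $O(1)$-precise evaluation of $\E M_n$. The strategy is to amplify these $O(1)$ fluctuations into the four stated tail bounds via the Gibbs--Markov decomposition of the GFF at an appropriately chosen scale: partitioning $A_n$ into $s^2$ disjoint sub-boxes $\{B_i\}$ of side $n/s$, one writes $\eta = \hat\eta + \varphi$ with $\hat\eta|_{B_i}$ an independent GFF on $B_i$, $\varphi$ harmonic inside each $B_i$ with boundary values $\eta|_{\partial B_i}$, and $\hat\eta \perp \varphi$. The identity $\E M_n = \E M_{n/s} + \Theta(\log s)$ (from Bramson--Zeitouni) will be used to match centerings across scales.

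\textbf{Right tail.} For the upper bound I would apply the decomposition above at scale $s$ polynomial in $\lambda$. The $s^2$ independent sub-box maxes $\hat M^{(i)} := \max_{B_i}\hat\eta$ can be controlled inductively on the same bound (with tightness as the base case, and Bramson--Zeitouni matching centerings across scales), while the harmonic part has pointwise variance $\Theta(\log s)$ at interior vertices of sub-boxes, making $\max\varphi_v$ (over interior vertices) tractable via Borell--TIS; the contribution of vertices close to the skeleton $\bigcup_i\partial B_i$ is handled by iterating the decomposition at finer scales. For the matching lower bound, I would use a Cameron--Martin shift: with $v^\ast$ a deep interior vertex and $G$ the Green function of the simple random walk killed at $\partial A_n$, shifting $\eta$ by $f = \lambda G(\cdot, v^\ast)/G(v^\ast,v^\ast)$ raises $\eta_{v^\ast}$ by $\lambda$ at Cameron--Martin cost $\exp(-\lambda^2/(2G(v^\ast,v^\ast))) = \exp(-\Theta(\lambda^2/\log n))$. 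Combined with the tightness bound $\P(M_n\geq \E M_n)\geq c$ and the elementary inequality $\lambda^2/\log n \leq \lambda$ valid for $\lambda\leq (\log n)^{2/3}\leq \log n$, this yields $\P(M_n\geq \E M_n+\lambda)\geq ce^{-C\lambda}$.

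\textbf{Left tail.} The double exponential arises from near-independence across $e^{\Theta(\lambda)}$ sub-boxes. For the upper bound I would take $s = e^{c\lambda}$ and decompose as above; conditionally on $\varphi$, the sub-box maxes $\hat M^{(i)}$ are mutually independent, and on a ``good'' Gaussian event for $\varphi$ (of positive probability via Borell--TIS) the event $\{M_n\leq \E M_n-\lambda\}$ forces $\hat M^{(i)}\leq \E M_{n/s} - \Omega(1)$ on a positive fraction of sub-boxes. Each such event has conditional probability $\leq 1-\delta$ by tightness; multiplying over $\Theta(e^{2c\lambda})$ independent sub-boxes gives the bound $e^{-ce^{c\lambda}}$. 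For the matching lower bound one exhibits a coarse-scale Gaussian event for $\varphi$ of probability $\geq ce^{-Ce^{C\lambda}}$ (via a Dirichlet-energy computation for a suitable negative harmonic profile across the skeleton) on which every sub-box max is forced below $\E M_n-\lambda$.

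\textbf{Main obstacle.} The most delicate step is the left-tail upper bound, where genuine independence across $e^{\Theta(\lambda)}$ sub-boxes must be reconciled with $O(1)$-precise control of the harmonic field $\varphi$. Bramson--Zeitouni's $O(1)$-precise formula for $\E M_n$ is essential to ensure that, on the good event, a positive fraction of sub-boxes have their centering correct to $O(1)$, so that tightness applies uniformly and the product over sub-boxes yields a genuine $(1-\delta)^{s^2}$ rate; any sub-optimality in these matching constants would spoil the claimed double exponential.
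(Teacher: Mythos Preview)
Your overall two-scale Gibbs--Markov picture is right, but two of the four arguments have genuine gaps that the paper resolves with ideas you are missing.

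\medskip
\textbf{Right-tail lower bound.} The Cameron--Martin shift $f=\lambda\,G(\cdot,v^{\ast})/G(v^{\ast},v^{\ast})$ raises $\eta_{v^{\ast}}$ by $\lambda$, but it raises $\eta_w$ only by $\lambda\,G(w,v^{\ast})/G(v^{\ast},v^{\ast})$, which for $w$ at macroscopic distance from $v^{\ast}$ is $O(\lambda/\log n)$. Since the maximizer of $\eta$ is in no way localized near $v^{\ast}$, the event $\{M_n\ge \E M_n\}$ does \emph{not} become $\{M_n\ge \E M_n+\lambda\}$ after the shift, and ``combined with tightness'' does not yield the claimed bound. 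The paper's fix is exactly to make the shift spatially uniform on the region where the maximum is sought: one takes a centered sub-box $A_{n'}$ with $n'=n e^{-\beta\lambda}$, writes $\eta=g+\phi$ on $A_{n'}$, observes that the harmonic part satisfies $\var\phi_v\ge \tfrac{2}{\pi}\beta\lambda+O(1)$ \emph{uniformly} over $A_{n'}$, and then asks that $\phi$ at the (random, $\phi$-independent) maximizer of $g$ exceed $\Theta(\lambda)$. This gives the honest Gaussian cost $e^{-\Theta(\lambda)}$.

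\medskip
\textbf{Left-tail upper bound.} With a uniform partition into $s^2$ sub-boxes, for centers $v_i,v_j$ of neighbouring sub-boxes one has $\cov(\varphi_{v_i},\varphi_{v_j})=G_{\partial A_n}(v_i,v_j)=\Theta(\log s)=\Theta(\lambda)$, which is of the \emph{same} order as $\var\varphi_{v_i}$. Thus the $\varphi$-values across sub-boxes are strongly correlated, and your ``good $\varphi$'' event cannot be both of positive probability and such that its complement has probability $\le e^{-c e^{c\lambda}}$; the decomposition $\P(\text{bad }\varphi)+\P(\text{good }\varphi,\ \hat M^{(i)}\text{ small})$ then fails to give a doubly-exponential bound. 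The paper's essential idea, which you are missing, is to exploit the Dirichlet boundary: one places $m\asymp e^{c\lambda}$ well-separated disks along a single row \emph{close to $\partial A_n$}. Proximity to the boundary forces the harmonic parts $\phi_v$ at the sub-region argmaxes to have pairwise correlations $O(\lambda\sqrt{r/\ell})\to 0$, and Slepian's lemma then compares $\{\phi_v\}$ to an essentially i.i.d.\ family, yielding $\P(\phi_v\le 0\ \forall v)\le e^{-c e^{c\lambda}}$ directly, with no conditioning on a good event. For the remaining two bounds the paper also proceeds differently (a one-step contradiction against tightness at a \emph{larger} box $A_N$ for the right-tail upper bound; FKG plus a Slepian comparison of correlation coefficients for the left-tail lower bound), but there your outlines are at least repairable.
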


The preceding theorem gives the tail behavior when the deviation is
less than $(\log n)^{2/3}$. For $\lambda \geq (\log n)^{2/3}$, by
isoperimetric inequality for general  Gaussian processes (see, e.g.,
Ledoux \cite[Thm. 7.1, Eq. (7.4)]{Ledoux89}) and the simple fact
that $\max_v\var \eta_v = 2\log n/\pi + O(1)$ (see
Lemma~\ref{lem-green-function}), we have
$$\P(|M_n - \E M_n | \geq \lambda) \leq 2\, \mathrm{e}^{- c\lambda^2/\log n}\,, \mbox{for an absolute constant } c>0\,.$$
Combined with Theorem~\ref{thm-concentration}, this immediately
gives the order of the variance for $M_n$. Before stating the result, let us specify some conventions for notations throughout the paper. The letters $c$ and
$C$ denote absolute positive constants, whose values might vary from
line to line. By convention, we denote by $C$ large constants and by
$c$ small constants. Other absolute constants that appeared are
fixed once and for all. If there exists an absolute constant $C>0$ such that $a_n = C b_n$ for all $n\geq 1$, we write $a_n = O(b_n)$; we write $a_n = \Theta (b_n)$ if $a_n = O(b_n)$ as well as $b_n = O(a_n)$; if $\limsup_{n\to \infty} a_n /b_n \to 0$, we write $a_n = o(b_n)$. We are now ready to state the corollary.
\begin{cor}\label{cor-variance}
We have that $\var M_n = \Theta(1)$.
\end{cor}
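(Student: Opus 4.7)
The plan is to express $\var M_n = \E[(M_n - \E M_n)^2]$ via the layer-cake formula
\[
\var M_n = \int_0^\infty 2\lambda\, \P(|M_n - \E M_n|\geq \lambda)\, d\lambda,
\]
and bound the integrand using the two different tail estimates available: the exponential/double-exponential bounds of Theorem~\ref{thm-concentration} on the range $0\leq \lambda\leq (\log n)^{2/3}$, and the Gaussian isoperimetric bound $\P(|M_n-\E M_n|\geq \lambda)\leq 2\exp(-c\lambda^2/\log n)$ on the range $\lambda \geq (\log n)^{2/3}$.

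For the upper bound $\var M_n = O(1)$, I would split the integral at $\lambda_0 = (\log n)^{2/3}$. On $[0,\lambda_0]$ I use $\P(|M_n-\E M_n|\geq \lambda)\leq C\mathrm{e}^{-c\lambda}$ (taking the weaker of the two right/left tail bounds in Theorem~\ref{thm-concentration}), so
\[
\int_0^{\lambda_0} 2\lambda \cdot C\mathrm{e}^{-c\lambda}\, d\lambda \leq \int_0^\infty 2C\lambda\, \mathrm{e}^{-c\lambda}\, d\lambda = O(1).
\]
On $[\lambda_0,\infty)$ I substitute $u = c\lambda^2/\log n$ in the Gaussian tail bound to get
\[
\int_{\lambda_0}^\infty 4\lambda\, \mathrm{e}^{-c\lambda^2/\log n}\, d\lambda = \tfrac{2\log n}{c}\, \mathrm{e}^{-c(\log n)^{1/3}},
\]
which is $o(1)$ as $n\to \infty$, hence bounded by a constant uniformly in $n$. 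Adding the two pieces gives $\var M_n = O(1)$.

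For the matching lower bound $\var M_n = \Omega(1)$, I would simply invoke the lower tail estimates of Theorem~\ref{thm-concentration} at a fixed $\lambda = 1$: there is an absolute constant $c_0 > 0$ with $\P(M_n \geq \E M_n + 1)\geq c_0$ for all $n$. Since $(M_n - \E M_n)^2 \geq 1$ on this event, Markov's inequality in reverse yields
\[
\var M_n = \E[(M_n-\E M_n)^2] \geq 1 \cdot \P(M_n\geq \E M_n+1) \geq c_0.
\]
No real obstacle arises — the corollary is a bookkeeping consequence of Theorem~\ref{thm-concentration} combined with the Gaussian concentration bound recalled in the paragraph preceding the statement. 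The only mildly delicate point is ensuring the split at $(\log n)^{2/3}$ is chosen so that the Gaussian tail contributes $o(1)$ to the variance, which works precisely because $\lambda_0^2/\log n = (\log n)^{1/3} \to \infty$.
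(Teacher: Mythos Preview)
Your argument is correct and is exactly the computation the paper has in mind: the paper does not spell out a proof of the corollary at all, only remarks that combining Theorem~\ref{thm-concentration} with the Gaussian isoperimetric bound ``immediately gives the order of the variance for $M_n$.'' Your layer-cake split at $(\log n)^{2/3}$ together with the lower bound from $\P(M_n\ge \E M_n+1)\ge c\,e^{-C}$ is precisely how one makes this immediate.
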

Corollary \ref{cor-variance} improves an $o(\log n)$ bound on the
variance due to Chatterjee \cite{Chatterjee08}, thereby confirming a
folklore conjecture (see Question (4) of \cite{Chatterjee08}). An
important ingredient for our proof is the following result on the
tightness of the maximum of the GFF on 2D box due to Bramson and
Zeitouni \cite{BZ10}.
\begin{theorem}\label{thm-BZ}\cite{BZ10}
The sequence of random variables $M_n - \E M_n$ is tight and $$\E
M_n = 2\sqrt{2/\pi} \big(\log n - \tfrac{3}{8\log 2} \log\log n\big)
+ O(1)\,.$$
\end{theorem}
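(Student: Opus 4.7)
The plan is to establish tightness of $M_n - \E M_n$ and to pin down $\E M_n$ up to $O(1)$ by Gaussian comparison with hierarchical, tree-indexed processes that mimic the logarithmic covariance decay of the GFF. The starting point is the standard Green-function asymptotic $\Cov(\eta_u,\eta_v) = \frac{2}{\pi}\log(n/(|u-v|\vee 1)) + O(1)$ for $u,v$ away from $\partial A_n$. This covariance has the same logarithmic form, up to additive constants, as that of a Gaussian branching random walk with $\log_2 n$ levels, and all quantitative work will be routed through this analogy.

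The first step is to introduce a \emph{modified branching random walk} (MBRW) $\{S_v^{\uparrow}:v\in A_n\}$, built as a sum of $\log_2 n$ independent Gaussian fields where the $k$th summand is constant on a randomly translated dyadic partition of $A_n$ into boxes of side $2^k$, with per-scale variance tuned so that $\Cov(S_u^{\uparrow},S_v^{\uparrow}) \geq \Cov(\eta_u,\eta_v) - C$ uniformly. A dominated tree-indexed process $\{S_v^{\downarrow}\}$ will be built by restricting to a sparse, hierarchically arranged net of points whose pairwise covariances are bounded above by those of $\eta$. Sudakov--Fernique then sandwiches $\E M_n$ between $\E \max_v S_v^{\downarrow} - C$ and $\E \max_v S_v^{\uparrow} + C$.

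The second step is to compute $\E \max_v S_v^{\uparrow}$ up to $O(1)$ using the Bramson-style correction for branching random walks (Bramson for BBM; Addario-Berry--Reed for discrete BRW). For a BRW with $N$ levels, branching factor $b=4$, and Gaussian increments of variance $\sigma_k^2$ tuned to match the GFF covariance, the expected maximum takes the form $\alpha N - \frac{3}{2\beta}\log N + O(1)$, where $\alpha$ and $\beta$ are read off from the Legendre transform of the step log-moment generating function at its critical point. Plugging in $N=\log_2 n$ and the appropriate scaling yields $\E \max_v S_v^{\uparrow} = 2\sqrt{2/\pi}\log n - \frac{3\sqrt{2/\pi}}{4\log 2}\log\log n + O(1)$, which, after factoring $2\sqrt{2/\pi}$, reproduces the claimed formula. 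The same computation for $S_v^{\downarrow}$ provides the matching lower bound on $\E M_n$.

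For tightness of $M_n - \E M_n$, the right tail follows relatively easily: Sudakov--Fernique combined with the known exponential right-tail concentration of the BRW maximum around its mean transfers directly to $M_n$ (one can also deduce the right tail from isoperimetry since only a polynomial loss is needed here). The left tail will be the main obstacle. I would exploit the Gibbs--Markov (domain Markov) decomposition: conditioned on $\eta$ along the boundaries of a dyadic subdivision, the restrictions inside the sub-boxes are independent scaled GFFs plus a harmonic correction $\varphi$. Iterating this decomposition across scales and coupling each scale to the corresponding level of the MBRW reduces the left-tail estimate for $M_n$ to a left-tail estimate for the BRW max, which is double-exponentially small. The delicate point is that the harmonic corrections $\varphi$ generated at each scale are themselves Gaussian and must be shown, via pointwise concentration plus a union bound over the dyadic tree of sub-boxes, not to accumulate errors exceeding $O(1)$; making this multi-scale coupling work without losing the Bramson correction constant is the principal technical hurdle I would anticipate.
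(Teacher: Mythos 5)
This statement is not proved in the paper at all: it is quoted verbatim from Bramson and Zeitouni \cite{BZ10} and used as a black-box input throughout the arguments of Section 2 (for instance, in deriving \eqref{eq-expectation-difference} and in defining the constants $\alpha(1/4)$, $\alpha(1/2)$, $\alpha^*$). There is therefore no ``paper's own proof'' to compare your attempt against.

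That said, your sketch is a reasonable high-level summary of what Bramson and Zeitouni actually do: approximation of the GFF covariance by a modified branching random walk (MBRW) with dominating and dominated comparison fields, Sudakov--Fernique to transfer the expected maximum, the Bramson/Addario-Berry--Reed correction $-\tfrac{3}{2\theta^*}\log N$ at the critical tilt to pin down the $\log\log n$ term, and the Gibbs--Markov (domain-Markov) decomposition iterated across dyadic scales to obtain tightness (with the harmonic binding fields controlled via a union bound over the sub-box tree). Be aware, though, that what you have is a roadmap rather than a proof: each of the steps you list --- (i) constructing the MBRW so that the covariance comparison holds uniformly with only $O(1)$ slack, (ii) proving the precise second-order correction for the MBRW maximum including the constant in front of $\log\log n$, and (iii) the multi-scale coupling controlling the accumulated harmonic corrections without losing the Bramson constant --- is itself a substantial argument (these occupy most of \cite{BZ10} and the predecessor \cite{BDZ10}). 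As a ``blind proof attempt'' this is therefore more of an accurate citation of the known proof strategy than an independent proof, and if the intent were to actually establish Theorem~\ref{thm-BZ} from scratch, every one of those three bullets would need to be filled in with full detail.

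Finally, one small arithmetic sanity check you should carry out when filling in step (ii): with $L=\log_2 n$ scales, per-scale variance $\sigma^2=\tfrac{2\log 2}{\pi}$, and branching factor $b=4$, the critical tilt is $\theta^*=\sqrt{2\log b/\sigma^2}=\sqrt{2\pi}$, so the second-order correction is $\tfrac{3}{2\theta^*}\log L = \tfrac{3}{2\sqrt{2\pi}}\log\log n + O(1) = \tfrac{3}{4}\sqrt{2/\pi}\,\log\log n + O(1)$. Make sure this matches whichever normalization of the $\log\log n$ coefficient you ultimately quote, since different sources parameterize by the side length $n$ versus the dyadic level $\log_2 n$, and it is easy to pick up or drop a stray factor of $\log 2$.
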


Previously to \cite{BZ10}, Bolthausen, Deuschel and Zeitouni
\cite{BDZ10} proved that $(M_n - \E M_n)$ is tight along a
deterministic subsequence $(n_k)_{k\in \N}$. Earlier works on the
extremal values of GFF include  Bolthausen, Deuschel and Giacomin
\cite{BDG01} who established the asymptotics for $M_n$, and Daviaud
\cite{Daviaud06} who studied the extremes for the GFF.

We compare our results with tail behavior for the maximum of the GFF
on a binary tree. Interestingly, in the case of tree, the maximum
exhibits an exponential decay for the right tail, but a Gaussian
type decay for the left tail as opposed to the double exponential
decay for 2D box. This is because in the case of 2D box, the
Dirichlet boundary condition decouples the GFF near the boundary
such that the GFF behaves almost independently close to the
boundary. The same phenomenon also occurs for the event that all the
GFFs are nonnegative: for a binary tree of height $n$ the
probability is about $\mathrm{e}^{-\Theta(n^2)}$, and for a box of
side length $n$ the probability is about $\mathrm{e}^{-\Theta(n)}$
(see Deuschel \cite{Deuschel96}).

Much more was known about the maximal displacement of branching
Brownian motion (BBM). In their classical paper \cite{KPP37}, Kolmogorov,
Petrovsky, and Piskunov studied its connection with the so-called
KPP-equation, from which it could be deduced that both the right and left tails exhibit exponential types of decay.
The probabilistic interpretation of KPP-equation in
terms of BBM was further exploited by Bramson \cite{Bramson78}. Then the precise asymptotic tails were computed, and in particular a polynomial prefactor for the right tail was detected (this appears to be fundamentally different from the tail of Gumble distribution, which arise from the maximum of, say, i.i.d.\ Gaussian variables). See,
e.g., Bramson \cite{Bramson83} and Harris \cite{Harris99} for the
right tail, and see Arguin, Bovier and Kistler \cite{ABK10} for the
left tail (the argument is due to De Lellis). In addition, Lalley
and Sellke \cite{LS87} obtained an integral representation for the
limiting law of the centered maximum.

\medskip

We now give the definition of GFF using the connection with random
walks (in particular, Green functions). Consider a connected graph
$G = (V, E)$. For $U \subset V$, the Green function $G_U(\cdot,
\cdot)$ of the discrete Laplacian is given by
\begin{equation}\label{eq-def-green-function}
G_U(x, y) = \E_x(\mbox{$\sum_{k=0}^{\tau_U - 1}$} \one\{S_k =
y\})\,, \mbox{ for all } x, y\in V\,,
\end{equation}
where $\tau_U$ is the hitting time to set $U$ for random walk
$(S_k)$, defined by (the notation applies throughout the paper)
\begin{equation}\label{eq-def-tau-A}
\tau_U = \min\{k\geq 0: S_k \in U\}\,.
\end{equation}
The GFF $\{\eta_v: v\in V\}$ with Dirichlet boundary on $U$ is then
defined to be a mean zero Gaussian process indexed by $V$ such that
the covariance matrix is given by  Green function $(G_U(x, y))_{x,
y\in V}$ (In general graph, it is typical to normalize the Green
function by the degree of the target vertex $y$. In the case of 2D
lattices, this normalization is usually dropped since the degrees
are constant). It is clear to see that $\eta_v = 0$ for all $v\in
U$.

\section{Proofs}

In this section, we prove Theorem~\ref{thm-concentration}. We start
with a brief discussion on the proof strategy, and then demonstrate
the upper (lower) bounds for the right (left) tails in the
subsequent four subsections.

\subsection{A word on proof strategy}

Our proof typically employs a two-level structure which involves
either a partitioning or a packing for a 2D box $A_n$ by (slightly)
smaller boxes. In all the proofs, we use Theorem~\ref{thm-BZ} to
control the behavior in small boxes, and study ``typical'' events on
small boxes with probability strictly bounded away from 0 and 1. The
large deviation bounds typically come from gluing the small boxes
together to a big box, with the probability either inverse
proportional to the number of small boxes or exponentially small in
the number of boxes.

By Theorem~\ref{thm-BZ}, there exists a universal constant $\kappa
>0$ such that for all $n \geq 3 n'$
\begin{equation}\label{eq-expectation-difference}
2\sqrt{2/\pi}\log (n/n') - \tfrac{3\sqrt{2/\pi}}{4\log 2}\log(\log
n/\log n') - \kappa \leq \E M_n - \E M_{n'} \leq 2\sqrt{2/\pi}\log
(n/n') + \kappa\,.
\end{equation}
That is to say, in order to observe a difference of $\lambda$ in the
expectation for the maximum, the side length of the box has to
increase (decrease) by a factor of $\exp(\Theta(\lambda))$. This
suggests that the number of small boxes shall be
$\exp(\Theta(\lambda))$ in our two-level structure. Depending on how
the large deviation arises, this will yield a tail of either
exponential or double exponential decay.

In order to construct the two-level structure, we use repeatedly the
decomposition of Gaussian process: for a joint Gaussian process $(X,
Y)$, we can write $X$ as a sum of a (linear) function of $Y$ and an
independent Gaussian process $X'$. Here, we used a crucial fact that
Gaussian processes possess linear structures where orthogonality
implies independence. Furthermore, the next well-known property
specific to GFF proves to be quite useful (see Dynkin \cite[Thm.
1.2.2]{Dynkin80}).
\begin{lemma}\label{lem-DGFF}
Let $\{\eta_v\}_{v\in V}$ be a GFF on a graph $G=(V, E)$. For
$U\subset V$, define $\tau_U$ as in \eqref{eq-def-tau-A}. Then, for
$v\in V$, we have
\begin{align*}\E(\eta_v \mid
\eta_u, u\in U) = \sum_{u\in U}\P_v(S_{\tau_U} = u) \cdot
\eta_u\,.\end{align*}
\end{lemma}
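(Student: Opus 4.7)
The plan is to exploit the jointly Gaussian nature of $(\eta_v,(\eta_u)_{u\in U})$. Since these are jointly Gaussian, the conditional expectation $\E(\eta_v \mid \eta_u, u\in U)$ equals the $L^2$-orthogonal projection of $\eta_v$ onto the linear span of $\{\eta_u : u\in U\}$, and therefore has the form $\sum_{u\in U} a_{v,u}\, \eta_u$. I would verify directly that the candidate choice $a_{v,u} = \P_v(S_{\tau_U}=u)$ satisfies the normal equations, i.e.\ that the residual $\eta_v - \sum_u a_{v,u}\, \eta_u$ is uncorrelated with every $\eta_w$, $w\in U$; by Gaussianity, this upgrades uncorrelated to independent, and delivers the claimed identity.

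Writing $h_v = \sum_{u\in U}\P_v(S_{\tau_U}=u)\, \eta_u$ and letting $G$ denote the covariance kernel of the GFF (a Green function in the sense of \eqref{eq-def-green-function}), the normal equations $\E[(\eta_v - h_v)\eta_w]=0$ for $w\in U$ reduce to the Green-function identity
\begin{equation*}
G(v,w) \;=\; \sum_{u\in U} \P_v(S_{\tau_U}=u)\, G(u,w), \qquad w\in U.
\end{equation*}
For $v\in U$ this is automatic since $\tau_U=0$ and $S_0=v$ force $\P_v(S_{\tau_U}=v)=1$. For $v\notin U$, any trajectory from $v$ has $S_k\notin U$ for $k<\tau_U$, so in particular $S_k\neq w$ for such $k$ whenever $w\in U$; one then splits the sum defining $G(v,w)$ at the stopping time $\tau_U$, conditions on the value of $S_{\tau_U}$, and invokes the strong Markov property to obtain the right-hand side. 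Assembling these two cases establishes the orthogonality, and Gaussianity finishes the argument.

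The only mild obstacle is bookkeeping the Dirichlet boundary set relative to $U$ when performing the strong Markov splitting: one wants $\tau_U$ to occur no later than the Dirichlet hitting time so that no contribution to $G(v,w)$ is lost when the initial segment of the walk is discarded. Under the natural and contextually relevant assumption that $U$ contains the Dirichlet boundary (which is how the lemma will be applied in the remainder of the paper), this concern does not materialize and the strong-Markov argument goes through verbatim.
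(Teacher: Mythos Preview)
Your approach is correct and standard: for jointly Gaussian vectors the conditional expectation is the $L^2$-projection onto the span of the conditioning variables, and verifying the normal equations reduces to the strong-Markov identity for the Green function. The paper itself does not supply a proof of this lemma; it simply cites Dynkin \cite[Thm.~1.2.2]{Dynkin80}, so there is no in-paper argument to compare against.

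One correction to your closing remark. The hypothesis you isolate---that $U$ contain the Dirichlet boundary $D$---is sufficient, but it is \emph{not} how the lemma is actually used later in the paper. In the lower bound on the right tail ($U=\partial A_{n'}$) and in the upper bound on the left tail ($U=\partial\mathfrak C$), the conditioning set $U$ does \emph{not} contain $\partial A_n$. What does hold in every application, and what your strong-Markov splitting genuinely requires, is the weaker condition that $\tau_U\le\tau_D$ holds $\P_v$-a.s., i.e.\ that $U$ separates $v$ from $D$ in the graph. Under this condition no visits to $w\in U$ are lost when you discard the segment before $\tau_U$, and your argument goes through verbatim. Equivalently and perhaps more cleanly: since $\eta|_D\equiv 0$ almost surely, conditioning on $\{\eta_u:u\in U\}$ is the same as conditioning on $\{\eta_u:u\in U\cup D\}$; replacing $U$ by $U\cup D$ makes the strong-Markov step unconditionally valid, and the extra terms with $u\in D$ vanish. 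Without some such hypothesis the identity as literally stated can fail (take a path graph with $D$ the two endpoints, $U$ a single interior vertex, and $v$ another interior vertex on the far side of $D$ from $U$), so your instinct to flag the issue is right---just sharpen the condition.
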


\subsection{Upper bound on the right tail}

In this subsection, we prove that for an absolute constant $C, \lambda_0>0$
\begin{equation}\label{eq-upper-tail}
\P(M_n - \E M_n \geq \lambda) \leq C \mathrm{e}^{-\sqrt{\pi/2}
\lambda}\,, \mbox{ for all } n\in \mathbb{N} \mbox{ and }
\lambda \geq \lambda_0\,.
\end{equation}
Note that we could choose $\lambda_0$ arbitrarily large by adjusting the constant $C$ in Theorem~\ref{thm-concentration}.
Let $N = n \lceil \mathrm{e}^{\sqrt{\pi/8} (\lambda - \kappa -
\alpha)} \rceil$, where $\kappa$ is from
\eqref{eq-expectation-difference} and $\alpha > 0$ will be selected
later. Denote by $p = p_\alpha = \mathrm{e}^{-\sqrt{\pi/2} (\lambda
- \kappa - \alpha)}$ and $k = \lceil \mathrm{e}^{\sqrt{\pi/8}
(\lambda - \kappa - \alpha)} \rceil$. It suffices to prove that
$\P(M_n - \E M_n \geq \lambda) \leq p$, and we prove it by
contradiction. To this end, we assume that
\begin{equation}\label{eq-assumption-upper}
\P(M_n - \E M_n \geq \lambda) > p
\end{equation}
and try to derive a contradiction.

Now, consider an $N \times N$ 2D box $A_N$ and let $\{\eta_v: v\in
A_N\}$ be a GFF on $A_N$ with Dirichlet boundary condition. We
partition $A_N$ into $k^2$ boxes of side length $n$ and denote by
$\mathcal{B}$ the collection of these boxes. We abuse the notation
$\partial \mathcal{B}$ to denote the union of the boundary sets of
the smaller boxes in $\mathcal{B}$. For $B\in \mathcal{B}$, we let
$\{g_v^B: v\in B\}$ be a GFF on $B$ with Dirichlet boundary
condition and we let $\{\{g_v^B: v\in B\}\}_{B\in \mathcal{B}}$ be
independent from each other and independent from $\{\eta_v: v\in
\partial \mathcal{B}\}$. Using the decomposition of Gaussian
process, we can write that for every $v\in B\subseteq A_N$
\begin{equation}\label{eq-decomposition}
\eta_v = g^B_v + \E(\eta_v \mid \{\eta_u: u\in \partial
\mathcal{B}\})\,.
\end{equation} Denote by $\phi_v = \E(\eta_v \mid \{\eta_u: u\in
\partial \mathcal{B}\})$. We note that $\phi_v$ is a convex
combination of $\{\eta_u: u\in
\partial \mathcal{B}\}$ where the linear coefficients are
deterministic. Thus,
\begin{equation}\label{eq-independence}
\{\phi_v : v\in A_N\} \mbox{ is independent of } \{\{g^B_v: v\in
B\}: B\in \mathcal{B}\}\,.
\end{equation}

Denote by $M_B = \sup_{v\in B} g^B_v$. It is clear that $\{M_B : B
\in \mathcal{B}\}$ is a collection of i.i.d.\ random variables and
each of them is distributed as $M_n$. Therefore, by
\eqref{eq-assumption-upper}, we obtain that $\P(M_B \geq \E M_n +
\lambda ) \geq p$.  Using independence, we get
$$\P(\mbox{$\sup_{B\in \mathcal{B}} \sup_{v\in B}$}\, g^B_v \geq \E M_n + \lambda) = \P(\mbox{$\sup_{B\in \mathcal{B}}$} M_B \geq \E M_n + \lambda ) \geq 1/2\,.$$
Let $\chi \in B \subseteq A_N$ such that $g^B_\chi = \sup_{B\in
\mathcal{B}} \sup_{v\in B} g^B_v$. We see that $\chi$ is random
(obviously) and independent of $\{\phi_v : v\in \partial
\mathcal{B}\}$ by \eqref{eq-independence}. Therefore, we obtain
\begin{equation}\label{eq-change-boundary}\P(\mbox{$\sup_{v\in A_N}$} \eta_v \geq \E M_n + \lambda) \geq
\P (g^B_\chi \geq \E M_n + \lambda, \phi_\chi \geq 0) \geq (1/2)
\min_{v\in A_N} \P (\phi_v \geq 0) = 1/4\,.\end{equation} Recalling
\eqref{eq-expectation-difference} and our definition of $N$, we thus
derive that
$$\P(M_N - \E M_N \geq \alpha) \geq 1/4\,.$$
However, Theorem~\ref{thm-BZ} implies that there exists a universal
constant $\alpha(1/4) > 0$ such that $\P(M_n - \E M_n \geq
\alpha(1/4)) < 1/4$ for all $n\in \N$. Setting $\alpha =
\alpha(1/4)$, we arrive at a contradiction and thus show that
\eqref{eq-assumption-upper} cannot hold, thereby establishing
\eqref{eq-upper-tail}.

\subsection{Lower bound on the right tail}
In this subsection, we analyze the lower bound on the right tail and
aim to prove that for absolute constant $c, \lambda_0>0$
\begin{equation}\label{eq-right-lower}
\P(M_n - \E M_n \geq \lambda) \geq  \tfrac{c}{\lambda}\,
\mathrm{e}^{- 8 \sqrt{2\pi}\lambda} , \mbox{ for all } n\in \N \mbox{
and } \lambda_0\leq \lambda \leq (\log n)^{2/3}\,.
\end{equation}
To prove the above lower bound, we consider a box $A_{n'}$ of side
length $n' = n \mathrm{e}^{-\beta \lambda}$ in the center of $A_n$,
where $\beta > 0$ is to be selected (note that since $\lambda \leq (\log n)^{2/3}$, we have $n'\geq 1$ is well defined). Let $\{g_v: v\in A_{n'}\}$ be a
Gaussian free field on $A_{n'}$ with Dirichlet boundary condition
and independent from $\{\eta_v : v\in \partial A_{n'}\}$. Analogous
to \eqref{eq-decomposition}, we can write that
$$\eta_v = g_v + \phi_v, \mbox{ for all } v\in A_{n'}\,,$$
where $\phi_v = \E(\eta_v \mid \{\eta_u : u\in \partial A_{n'}\})$
is a convex combination of $\{\eta_u: u\in \partial A_{n'}\}$. We
wish to estimate the variance of $\phi_v$. For this purpose, we need
the following standard estimates on Green functions for random walks
in 2D lattices. See, e.g., \cite[Prop. 4.6.2, Thm. 4.4.4]{LL10} for
a reference.
\begin{lemma}\label{lem-green-function}
For $A\subset \mathbb{Z}^2$, consider a random walk $(S_t)$ on
$\mathbb{Z}^2$ and define $\tau_{\partial A} = \min\{j\geq 0: S_j
\in
\partial A\}$ be the hitting time to $\partial A$. For $u, v\in A$, let $G_{\partial A}(u, v)$ be the Green function as in \eqref{eq-def-green-function}.
For a certain nonnegative function $a(\cdot, \cdot)$ such that $a(x,
x) = 0$ and $a(x, y) = \frac{2}{\pi} \log |x-y| + \frac{2\gamma \log
8}{\pi} + O(|x-y|^{-2})$, where $\gamma$ is Euler's constant. Then,
we have
$$G_{\partial A}(u, v) = \E_u(a(S_{\tau_{\partial A}}, v)) - a(u,
v)\,.$$
\end{lemma}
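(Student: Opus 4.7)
The plan is to recognize that the right-hand side of the claimed identity, viewed as a function of $u$ with $v$ held fixed, solves the same discrete boundary-value problem that characterizes $G_{\partial A}(u,v)$. Writing $L$ for the discrete Laplacian $Lf(u) := \tfrac{1}{4}\sum_{u'\sim u} f(u') - f(u)$, a one-step Markov decomposition of \eqref{eq-def-green-function} gives that $G_{\partial A}(\cdot, v)$ is the unique function on $A$ satisfying $LG_{\partial A}(u,v) = -\delta_v(u)$ for $u \in A\setminus \partial A$ together with the boundary condition $G_{\partial A}(u,v) = 0$ for $u \in \partial A$. It therefore suffices to verify these two properties for the function $f(u) := \E_u\bigl(a(S_{\tau_{\partial A}}, v)\bigr) - a(u,v)$.

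The boundary condition is immediate: when $u \in \partial A$ one has $\tau_{\partial A} = 0$ and $S_0 = u$, so the two terms defining $f(u)$ cancel. For the interior equation the key input is the defining algebraic property of the potential kernel, namely $La(\cdot, v) = \delta_v$ on all of $\mathbb{Z}^2$; this is exactly what singles out $a$ as a discrete fundamental solution of the Laplacian on the plane, and the nonnegativity together with the logarithmic asymptotic stated in the lemma is the content of the reference \cite{LL10}. Granting this, the strong Markov property applied after one step yields that $u\mapsto \E_u\bigl(a(S_{\tau_{\partial A}}, v)\bigr)$ is discrete-harmonic on $A\setminus \partial A$; subtracting gives $Lf(u) = 0 - \delta_v(u) = -\delta_v(u)$ on the interior, completing the verification. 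Uniqueness of the solution to the boundary-value problem then yields $f \equiv G_{\partial A}(\cdot,v)$.

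The main obstacle is not the argument above, which is a two-line verification once the potential kernel is in hand, but rather the construction of $a$ itself and the derivation of its asymptotic expansion. The standard route defines $a(x) := \lim_{n\to\infty}\sum_{k=0}^{n}\bigl(\P_0(S_k = 0) - \P_x(S_k = 0)\bigr)$ and extracts the expansion $a(x) = \tfrac{2}{\pi}\log|x| + c + O(|x|^{-2})$ via the local central limit theorem and Fourier inversion on the torus, with the additive constant emerging from a careful residue computation. Since this is entirely standard for simple random walk on $\mathbb{Z}^2$ and is exactly what \cite{LL10} is being cited for, I would invoke the existence and asymptotic of $a$ as a black box and restrict the write-up to the boundary/interior check sketched above.
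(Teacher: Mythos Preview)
The paper does not prove this lemma at all; it is stated with a citation to \cite[Prop.~4.6.2, Thm.~4.4.4]{LL10} and used as a black box. Your sketch is correct and is precisely the standard argument found in that reference: characterize $G_{\partial A}(\cdot,v)$ as the unique solution of the discrete Dirichlet problem $Lf=-\delta_v$ on $A\setminus\partial A$, $f=0$ on $\partial A$, and verify that $\E_u\bigl(a(S_{\tau_{\partial A}},v)\bigr)-a(u,v)$ solves it because $a$ is a fundamental solution of the planar Laplacian and the first term is harmonic by the strong Markov property. Your instinct to invoke the construction and asymptotics of the potential kernel as a black box from \cite{LL10} is exactly what the paper does for the entire lemma.
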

By the preceding lemma, we infer that for any $u, w\in \partial
A_{n'}$,
$$\cov(\eta_u, \eta_w) = G_{\partial A_n}(u, w) \geq \tfrac{2}{\pi} \beta\lambda + O(1)\,.$$
Since $\phi_v$ is a convex combination of $\{\eta_u: u\in
\partial A_{n'}\}$, this implies that for all $v\in A_{n'}$
\begin{equation}\label{eq-lower-bound-variance}
\var \phi_v \geq \tfrac{2}{\pi} \beta\lambda + O(1)\,.
\end{equation}
By Theorem~\ref{thm-BZ}, there exists an absolute constant
$\alpha(1/2)$ such that
\begin{equation}\label{eq-def-alpha-1/2}
\P(M_n - \E M_n \geq -\alpha(1/2)) \geq 1/2 \mbox{ for all } n\in \N
\,.\end{equation} Let $\chi\in A_{n'}$ such that $g_\chi =
\sup_{v\in A_{n'}} g_v$. Recalling that $|\E M_n - \E M_{n'}| \leq
2\sqrt{2/\pi} \beta \lambda + O(\log \beta \lambda)+\kappa$ and that $\lambda \geq
\lambda_0$, we obtain that
\begin{align*}\P(\mbox{$\sup_{v\in A_n}$} \eta_v \geq \E M_n + \lambda)& \geq
\P(g_\chi \geq \E M_{n'} - \alpha(1/2), \phi_\chi \geq
\alpha(1/2) + \kappa + (2\sqrt{2/\pi}\beta + 1) \lambda)\\
&\geq \frac{1}{2} \frac{\pi}{\sqrt{\beta \lambda + O(1)}} \int_{z \geq \alpha(1/2) + \kappa + (2\sqrt{2/\pi}\beta + 1) \lambda} \mathrm{e}^{-\frac{z^2}{2\beta \lambda /\pi +O(1)}} dz \\
& \geq \frac{c}{\sqrt{\lambda}} \mathrm{e}^{-\pi (2\sqrt{2/\pi} \beta + 1)^2\lambda/\beta}\,,
\end{align*}
where the first inequality follows from
\eqref{eq-lower-bound-variance} and the independence between $\chi$
and $\{\phi_v: v\in A_{n'}\}$ (analogous to
\eqref{eq-independence}), and in the second inequality $c>0$ is a small absolute constant. Setting $\beta = \sqrt{\pi/8}$, we obtain
the desired estimate \eqref{eq-right-lower}.

\subsection{Upper bound on the left tail}

In this subsection, we give the upper bound for the lower tail of
the maximum and prove the following for absolute constants $C, c, \lambda_0>0$.
\begin{equation}\label{eq-left-upper}
\P(M_n - \E M_n \leq -\lambda) \leq C \mathrm{e}^{-c
\mathrm{e}^{c\lambda}}\,, \mbox{ for all } n\in \N \mbox{ and }
\lambda_0\leq \lambda \leq (\log n)^{2/3}\,.
\end{equation}
Let $\alpha = \alpha(1/2)$ be defined as in
\eqref{eq-def-alpha-1/2}. Denote by $r = n
\exp(-\sqrt{\pi/8}(\lambda - \alpha - \kappa-4))$ and $\ell = n
\exp(-\sqrt{\pi/8}(\lambda - \alpha - \kappa-4)/3)$. Assume that the
left bottom corner of $A_n$ is the origin $o= (0, 0)$. Define $o_i =
(i\ell, 2r)$ for $1\leq i \leq m = \lfloor n/2\ell \rfloor$. Let
$\mathcal{C}_i$ be a discrete ball of radius $r$ centered at $o_i$
and let $B_i \subset \mathcal{C}(i)$ be a box of side length $r/8$
centered at $o_i$. Let $\mathfrak{C} = \{\mathcal{C}_i: 1\leq i\leq
m\}$ and $\mathcal{B} = \{B_i: 1\leq i\leq m\}$. Analogous to
\eqref{eq-decomposition}, we can write
$$\eta_v = g_v^B + \phi_v \mbox{ for all } v\in B \subseteq \mathcal{C} \in \mathfrak{C}\,,$$
where $\{g_v^B: v\in B\}$ is the projection of the GFF on
$\mathcal{C}$ with Dirichlet boundary condition on $\partial
\mathcal{C}$, and $\{\{g_v^B: v\in B\} :  B\in \mathcal{B}\}$ are
independent of each other and of $\{\eta_v : v\in
\partial \mathfrak{C}\}$ (here $\partial \mathfrak{C} = \cup_{\mathcal{C} \in \mathfrak C} \partial \mathcal C$), and $\phi_v = \E(\eta_v \mid \{\eta_u: u\in
\partial \mathfrak{C}\})$ is a convex combination of $\{\eta_u: u\in
\partial \mathfrak{C}\}$. For every $B\in \mathcal{B}$, define
$\chi_B \in B$ such that
$$g_{\chi_B}^B = \sup_{v\in B}g_v^B\,.$$
Recalling \eqref{eq-expectation-difference}, we get that $\E M_n -
\E M_{r/8} \leq \lambda - \alpha$ (here we assume $\lambda_0$ is large enough such that $n > r/8$).

Using an analogous derivation of \eqref{eq-change-boundary}, we get
that
$$\P(g_{\chi_B}^B \geq \E M_n - \lambda) \geq 1/4\,,$$
where we used definition of $\alpha$ in \eqref{eq-def-alpha-1/2}.
Let $W = \{\chi_B: g_{\chi_B}^B \geq \E M_n - \lambda, B\in
\mathcal{B}\}$. By independence, a standard concentration argument
gives that for an absolute constant $c> 0$
\begin{equation}\label{eq-W}
\P(|W| \leq \tfrac{1}{8} m) \leq \mathrm{e}^{-c m}\,.\end{equation}

It remains to study the process $\{\phi_v: v\in W\}$. If there
exists $v\in W$ such that $\phi_v > 0$,  we have $\sup_{u \in A_n}
\eta_u > \E M_n - \lambda$. Thanks to independence, it then suffices
to prove the following lemma.
\begin{lemma}\label{lem-positive-in-U}
Let $U\subset \cup_{B\in \mathcal{B}} B$ such that $|U\cap B| \leq
1$ for all $B\in \mathcal{B}$. Assume that $|U| \geq m/8$. Then, for
some absolute constants $C, c>0$
$$\P(\phi_v \leq 0 \mbox{ for all } v\in U) \leq C \mathrm{e}^{-c \mathrm{e}^{c \lambda}}\,.$$
\end{lemma}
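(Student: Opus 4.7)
The plan is to decompose each $\phi_{v_i}$ (for $v_i \in U\cap B_i$) into an independent part across $i$ and a correlated part using an intermediate disk scale, integrate out the independent part, and reduce the whole estimate to a large-deviation bound on the correlated Gaussian vector — which carries branching-random-walk-like structure.

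For each $i$ with $U\cap B_i\neq\emptyset$ pick one $v_i$ in this intersection, and let $I$ be the set of such indices, so $|I|\geq m/8$. I introduce disks $\mathcal{D}_i$ of radius $\ell/4$ centered at $o_i$; since the $o_i$'s are spaced by $\ell$ and $r\ll\ell/4$, the $\mathcal{D}_i$ are pairwise disjoint and each strictly contains $\mathcal{C}_i$. Decomposing the GFF with respect to $\{\mathcal{D}_i\}$ gives
\begin{equation*}
\eta_v = h^{\mathcal{D}_i}_v + \tilde{\psi}^{(i)}_v, \qquad v\in\mathcal{D}_i,
\end{equation*}
with $\{h^{\mathcal{D}_i}\}_i$ an independent family of GFFs on the $\mathcal{D}_i$ and $\tilde{\psi}^{(i)}_v=\E(\eta_v\mid\eta|_{\partial\mathcal{D}_i})$. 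Nesting this with the given decomposition $\eta=g^B+\phi$ on $\mathcal{C}_i\subset\mathcal{D}_i$ and using $g^B_v=h^{\mathcal{D}_i}_v-\xi^{(i)}_v$ where $\xi^{(i)}_v:=\E(h^{\mathcal{D}_i}_v\mid h^{\mathcal{D}_i}|_{\partial\mathcal{C}_i})$, one obtains the splitting
\begin{equation*}
\phi_{v_i} = X_i + Y_i, \qquad X_i := \xi^{(i)}_{v_i}, \quad Y_i := \tilde{\psi}^{(i)}_{v_i}.
\end{equation*}
Since the $h^{\mathcal{D}_i}$'s are mutually independent and jointly independent of $\eta|_{\partial\mathcal{D}_i}$, the $X_i$'s are independent across $i$ and jointly independent of $(Y_j)_j$. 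Lemma~\ref{lem-green-function} yields $\sigma_X^2:=\var(X_i)=(2/\pi)\log(\ell/(4r))+O(1)=\Theta(\lambda)$.

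Conditioning on $(Y_j)_j$ and integrating out the independent $X_i$'s,
\begin{equation*}
\P\bigl(\phi_{v_i}\leq 0\ \forall i\in I\bigr) \;=\; \E\prod_{i\in I}\Phi\bigl(-Y_i/\sigma_X\bigr) \;\leq\; \E\,2^{-N},
\end{equation*}
where $N:=|\{i\in I:Y_i\geq 0\}|$, using $\Phi(x)\leq 1/2$ for $x\leq 0$. Splitting by whether $N\geq|I|/8$, the lemma reduces to the concentration bound
\begin{equation*}
\P\bigl(N<|I|/8\bigr)\leq C\mathrm{e}^{-cm},
\end{equation*}
since on the complementary event the integrand is at most $2^{-|I|/8}\leq\mathrm{e}^{-c m}$.

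The main obstacle is this last concentration estimate. The vector $(Y_i)$ has $\var(Y_i)=\Theta(\lambda)$ and log-correlated covariances $\Cov(Y_i,Y_j)=(2/\pi)\log(n/|o_i-o_j|)+O(1)$, so a direct application of Borell--TIS yields only single-exponential-in-$\lambda$ decay, insufficient since $m=\mathrm{e}^{\Theta(\lambda)}$. I would attack this step by iterating the decomposition above through a geometric sequence of coarser scales: at step $t$, group $2^t$ consecutive $o_j$'s into a single cluster and replace $\mathcal{D}_i$ by disks of radius $\sim 2^t\ell$ around these clusters (still pairwise disjoint across distinct clusters), which extracts a fresh family of Gaussian contributions that are independent across clusters and piecewise constant within each cluster. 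Over the $\Theta(\log(n/\ell))=\Theta(\lambda)$ available scales this produces a modified branching random walk representation of $(Y_i)$ on $\sim m$ leaves; a Chernoff-type union argument along this hierarchy --- morally the branching-random-walk left-tail estimate in the spirit of~\cite{BDG01,BZ10} --- then shows that at least $|I|/8$ of the $Y_i$ are nonnegative with probability $\geq 1-C\mathrm{e}^{-cm}$, which completes the proof.
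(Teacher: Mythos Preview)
Your proposal misses the key geometric feature of the construction and, as a result, both the variance computations and the covariance structure you write down are wrong.

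Recall that the centers are $o_i=(i\ell,2r)$, so the disks $\mathcal{C}_i$ sit at height $2r$ from $\partial A_n$, while $\ell/r=\exp\bigl(\tfrac{2}{3}\sqrt{\pi/8}(\lambda-\alpha-\kappa-4)\bigr)\gg 1$. Hence your disks $\mathcal{D}_i$ of radius $\ell/4$ protrude well below the bottom edge of $A_n$; the decomposition must be taken with respect to $\mathcal{D}_i\cap A_n$, whose boundary contains a segment of $\partial A_n$ at distance $\sim 2r$ from $v_i$. Consequently
\[
\var(X_i)=G_{\partial(\mathcal{D}_i\cap A_n)}(v_i,v_i)-G_{\partial\mathcal{C}_i}(v_i,v_i)=O(1),
\]
not $\Theta(\lambda)$: both Green functions are governed by the nearby boundary at distance $\Theta(r)$. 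Your bound $\Phi(-Y_i/\sigma_X)\le 1/2$ then buys nothing beyond a constant factor per coordinate, and you are left with essentially the original problem for $(Y_i)$.

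The more serious error is the claim that $\Cov(Y_i,Y_j)=(2/\pi)\log(n/|o_i-o_j|)+O(1)$. That formula holds for points in the bulk, but here the proximity to $\partial A_n$ screens the field: for $u\in\mathcal{C}_i$, $v\in\mathcal{C}_j$ with $i\ne j$ one has $\Cov(\phi_u,\phi_v)=O(\lambda)\,\P_u(\tau_{\partial\mathcal{C}_j}<\tau_{\partial A_n})$, and the random walk from $u$ hits the bottom edge (distance $\sim r$) long before it travels horizontal distance $\ell$, giving $\P_u(\tau_{\partial\mathcal{C}_j}<\tau_{\partial A_n})\le C\sqrt{r/\ell}$. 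Thus the correlation coefficients satisfy $\rho_{u,v}=O(\lambda\sqrt{r/\ell})$, which is \emph{exponentially small in $\lambda$}, not logarithmic. There is no log-correlated structure here and no need for a multi-scale BRW argument; indeed, the concentration step you leave as a sketch is the wrong target.

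The paper's proof exploits exactly this near-independence: once $\rho:=\max_{u\ne v}\rho_{u,v}=O(\lambda\sqrt{r/\ell})$ is established, Slepian's inequality compares $(\phi_v)_{v\in U}$ with $\zeta_i=\sqrt{\rho}\,X+\sqrt{1-\rho}\,Z_i$ for i.i.d.\ standard Gaussians $X,Z_i$, yielding
\[
\P(\phi_v\le 0\ \forall v\in U)\le \mathrm{e}^{-1/(2\rho)}+(9/10)^{|U|}\le C\mathrm{e}^{-c\mathrm{e}^{c\lambda}},
\]
since $1/\rho\ge c\,\mathrm{e}^{c\lambda}$ and $|U|\ge m/8\ge c\,\mathrm{e}^{c\lambda}$. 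The whole argument is a page; your proposed hierarchy is unnecessary and, as formulated, based on incorrect covariance estimates.
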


To prove the preceding lemma, we need to study the correlation
structure for the Gaussian process $\{\phi_v: v\in U\}$.
\begin{lemma}\cite[Lemma 6.3.7]{LL10}
For all $n\geq 1$, let $\mathcal{C}(n) \subset \mathbb{Z}^2$ be a discrete ball of
radius $n$ centered at the origin. Then there exist absolute constants
$c, C>0$ such that for all $n\geq 1$ and $x\in \mathcal{C}(n/4)$ and $y\in
\partial \mathcal{C}(n)$
$$c/n\leq \P_x(\tau_{\partial \mathcal{C}(n)} = y) \leq C/n\,.$$
\end{lemma}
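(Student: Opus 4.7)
The statement is the discrete Poisson-kernel estimate for a ball: from any starting point in the inner quarter-ball, the hitting distribution on $\partial\mathcal{C}(n)$ is comparable to uniform, of order $1/n$ (note $|\partial\mathcal{C}(n)|\asymp n$). It is the discrete analog of the fact that planar Brownian motion from the center of a disk exits uniformly on its boundary by conformal invariance.

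My plan has two parts: a Harnack reduction, then a Green-function computation. For fixed $y\in\partial\mathcal{C}(n)$, the function $H_y(x)=\P_x(S_{\tau_{\partial\mathcal{C}(n)}}=y)$ is nonnegative and discrete-harmonic on $\mathcal{C}(n)\setminus(\partial\mathcal{C}(n)\cup\{y\})$. Since $|x-y|\geq 3n/4$ for every $x\in\mathcal{C}(n/4)$, the standard discrete Harnack inequality applied on $\mathcal{C}(n/2)$ (which avoids the singularity at $y$) yields $H_y(x)\asymp H_y(o)$ uniformly on $\mathcal{C}(n/4)$, reducing to $x=o$. A last-exit decomposition at an interior neighbor $z_y\sim y$ then gives
$$H_y(o)=\tfrac{1}{4}\sum_{z\in D,\,z\sim y}G_D(o,z),\qquad D=\mathcal{C}(n)\setminus\partial\mathcal{C}(n),$$
and since each $y$ has between one and four such neighbors, the task reduces to showing $G_D(o,z_y)\asymp 1/n$ whenever $z_y$ lies at distance $1$ from $\partial\mathcal{C}(n)$. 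For this I apply Lemma~\ref{lem-green-function} to get $G_D(o,z_y)=\E_o[a(S_\tau,z_y)]-a(o,z_y)$, and expand via $a(u,v)=\tfrac{2}{\pi}\log|u-v|+c_0+O(|u-v|^{-2})$. The subtracted term equals $\tfrac{2}{\pi}\log n+c_0+O(1/n)$ since $|z_y|=n-O(1)$, while the continuous analog of the first term is the Poisson integral $(2\pi)^{-1}\int_0^{2\pi}\log|n\mathrm{e}^{\mathrm{i}\theta}-z_y|\,d\theta=\log n$; if the discrete version matches to $O(1/n)$ accuracy, the target $G_D(o,z_y)\asymp 1/n$ follows.

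The genuine obstacle is exactly this last matching: the two $\tfrac{2}{\pi}\log n$ leading terms cancel exactly, so extracting a $\Theta(1/n)$ net value demands $O(1/n)$ precision on the discrete Poisson integral $\E_o[\log|S_\tau-z_y|]$, which itself encodes that the harmonic measure from $o$ is approximately uniform on $\partial\mathcal{C}(n)$ and so essentially restates the lemma. The cleanest resolution is a quantitative coupling with planar Brownian motion via a strong invariance principle, exploiting that the Brownian exit distribution on a disk is exactly uniform by conformal invariance; this is the route followed in \cite{LL10}. A more self-contained alternative is a Harnack-type bootstrap: use crude upper bounds from direct potential-kernel expansions to control $\E_o[\log|S_\tau-z_y|]$, and iterate until the sharp $\Theta(1/n)$ comparison emerges.
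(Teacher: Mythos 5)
The paper does not prove this lemma: it is quoted directly from \cite[Lemma 6.3.7]{LL10}, so there is no in-paper argument to compare against, and I evaluate the proposal on its own merits. Your skeleton --- Harnack reduction to $x=o$, the first-passage identity $H_y(o)=\tfrac14\sum_{z\in D,\,z\sim y}G_D(o,z)$, and a Green-function estimate $G_D(o,z_y)\asymp 1/n$ for $z_y$ one step inside the boundary --- is the right framework, and the first two steps are sound. (A minor point: $H_y$ is discrete-harmonic on all of $\mathcal{C}(n)\setminus\partial\mathcal{C}(n)$; it is a hitting probability, not a Green function, so there is no singularity at $y$ to dodge and the remark about $|x-y|\geq 3n/4$ is unnecessary.)

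The genuine gap is in the last step, and it is somewhat wider than your write-up indicates. First, with the asymmetric form $G_D(o,z_y)=\E_o[a(S_\tau,z_y)]-a(o,z_y)$, the remainder $O(|S_\tau-z_y|^{-2})$ in the potential-kernel expansion is $O(1)$ on the event that the walk exits near $y$; that event has probability of order $1/n$, so its contribution to the expectation is $O(1/n)$ --- the same order as the quantity you are after. You should symmetrize to $G_D(z_y,o)=\E_{z_y}[a(S_\tau)]-a(z_y)$, where $|S_\tau|$ and $|z_y|$ are both $n-O(1)$ and the remainder is honestly $O(n^{-2})$. Second, and more seriously, even after this fix you are left with $\tfrac{2}{\pi}\big(\E_{z_y}\log|S_\tau|-\log|z_y|\big)+O(n^{-2})$; the upper bound $\le C/n$ is immediate from $|S_\tau|\le n$, but the lower bound $\ge c/n$ is not, since $|z_y|$ and the possible $|S_\tau|$ all lie in a window of width $O(1)$ just below $n$, and the crude inequalities permit the difference to be $0$ or negative. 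Note that Lemma~\ref{lem-annulus} of this paper (the annulus hitting estimate, with its $O(1/k)$ error) is likewise too coarse at this resolution. Closing the gap requires showing that, started one step inside, the walk exits at points whose norm exceeds $|z_y|$ on average by $\Theta(1/n)$; that is where the real content lies. The cleaner resolution --- the one \cite{LL10} develops in the surrounding section --- is a discrete analogue of the exact radial Green-function identity $G(0,y)=\tfrac1{2\pi}\log(n/|y|)$ for the disk, from which $G_D(o,z_y)\asymp 1/n$ falls out directly. Your coupling suggestion could in principle work but must deliver $O(1/n)$ accuracy on the exit law, which is not automatic. As written, the proposal correctly locates the difficulty but leaves it open.
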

Write $a_{v, w} = \P_v(\tau_{\partial \mathcal{C}} = \tau_w)$. The preceding lemma implies that $c/r\leq a_{v, w} \leq C/r$ for all $v\in B\subset \mathcal{C}$. Combined with Lemma~\ref{lem-DGFF}, it follows that
\begin{equation}\label{eq-convex-coeff}
\phi_v = \sum_{w\in
\partial \mathcal{C}} a_{v, w} \eta_w\,.\end{equation} Therefore, we have
\begin{equation}\label{eq-variance}
\var \phi_v = \Theta(1/r^2) \sum_{u, w} \mathrm{Cov}(\eta_u, \eta_w)= \Theta(1/r^2) \sum_{u, w\in \partial \mathcal{C}}
G_{\partial A_n}(u, w)\,.\end{equation}
 In order to estimate the sum of Green
functions, one could use Lemma~\ref{lem-green-function}.
Alternatively, it is computation free if we apply the next lemma.
\begin{lemma}\label{lem-annulus}\cite[Prop. 6.4.1]{LL10}
For all $n\geq 1$, let $\mathcal{C}(n) \subset \mathbb{Z}^2$ be a discrete ball of
radius $n$ centered at the origin. Then for all $k< n$ and $x\in \mathcal{C}(n) \setminus \mathcal{C}(k)$, we have
$$\P_x(\tau_{\partial \mathcal{C}(n)} < \tau_{\partial \mathcal{C}(k)}) = \frac{\log |x| - \log k + O(1/k)}{\log n - \log k}\,.$$
\end{lemma}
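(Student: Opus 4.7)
The plan is to exploit the potential kernel $a(\cdot,\cdot)$ from Lemma~\ref{lem-green-function}, whose asymptotic $a(y,0)=\frac{2}{\pi}\log|y|+c_0+O(|y|^{-2})$ makes it the discrete analogue of the logarithmic potential on $\mathbb{R}^2$. The key structural fact is that $y\mapsto a(y,0)$ is discrete harmonic on $\mathbb{Z}^2\setminus\{0\}$, so $a(S_t,0)$ is a martingale for the simple random walk $(S_t)$ up until its first visit to the origin.

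First I would set $\tau=\tau_{\partial\mathcal{C}(n)}\wedge\tau_{\partial\mathcal{C}(k)}$. Since $k\geq 1$ and $x\in\mathcal{C}(n)\setminus\mathcal{C}(k)$, the origin is never visited before $\tau$, so $a(S_{t\wedge\tau},0)$ is a bounded martingale (bounded by $\frac{2}{\pi}\log n+O(1)$). Optional stopping then yields
\[
a(x,0)=p\cdot\E_x\!\left[a(S_\tau,0)\,\big|\,\tau_{\partial\mathcal{C}(n)}<\tau_{\partial\mathcal{C}(k)}\right]+(1-p)\cdot\E_x\!\left[a(S_\tau,0)\,\big|\,\tau_{\partial\mathcal{C}(k)}<\tau_{\partial\mathcal{C}(n)}\right],
\]
where $p=\P_x(\tau_{\partial\mathcal{C}(n)}<\tau_{\partial\mathcal{C}(k)})$ is the quantity of interest. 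Because the overshoot at either exit is of size $O(1)$, we have $|S_\tau|=n+O(1)$ on the first event and $|S_\tau|=k+O(1)$ on the second, so the asymptotic for $a$ evaluates the two conditional expectations as $\frac{2}{\pi}\log n+c_0+O(1/n)$ and $\frac{2}{\pi}\log k+c_0+O(1/k)$ respectively. Combining with the same expansion applied at $x$, the constant $c_0$ cancels and solving the resulting scalar equation for $p$ produces the stated identity.

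The main subtlety---and really the only step that requires care---is the error bookkeeping. An overshoot of $O(1)$ translates, through the logarithm, into an additive error of $O(1/k)$ in the value of $a$ on the inner boundary and $O(1/n)$ on the outer one; after averaging against $p$ and $1-p$ and absorbing the smaller $O(1/n)$ into $O(1/k)$ (legitimate since $k<n$), the relation reduces to $\frac{2}{\pi}(p\log n+(1-p)\log k)=\frac{2}{\pi}\log|x|+O(1/k)$. Dividing by $\frac{2}{\pi}(\log n-\log k)$ places the $O(1/k)$ in the numerator exactly as in the statement. One only needs to verify separately that the expansion for $a(x,0)$ itself contributes $O(|x|^{-2})=O(1/k^2)$, which is comfortably absorbed, and that the potential kernel is indeed harmonic away from the origin---both facts coming directly from Lemma~\ref{lem-green-function} and its derivation in \cite{LL10}.
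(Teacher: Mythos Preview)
The paper does not give its own proof of this lemma; it is simply quoted from \cite[Prop.~6.4.1]{LL10} and used as a black box. Your argument via optional stopping applied to the martingale $a(S_{t\wedge\tau},0)$ is correct and is in fact exactly the proof given in \cite{LL10}, so there is nothing to compare.
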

Now, write
$$p_{\min} = \min_{\mathcal{C} \in \mathfrak{C}}\min_{u\in \partial \mathcal{C}} \P_u(\tau_{\partial A_n} < \tau^+_{\partial \mathcal{C}}), \mbox{ and }
p_{\max} = \max_{\mathcal{C} \in \mathfrak{C}}\max_{u\in \partial \mathcal{C}} \P_u(\tau_{\partial A_n} < \tau^+_{\partial \mathcal{C}})\,,$$
where $\tau^+_{\partial \mathcal{C}} = \min\{k
\geq 1: S_k \in
\partial \mathcal{C}\}$ is the first returning time to $\partial \mathcal{C}$. By the preceding lemma, we have
$$1/(4 r\lambda) \leq p_{\min} \leq p_{\max} \leq O(1/r)
\mbox{ for all } u\in \partial \mathcal{C} \mbox{ and } \mathcal{C}
\in \mathfrak{C}\,.$$  Therefore, by Markovian property we have
\begin{equation}\label{eq-green-bound}
\Theta(r) \leq \frac{1}{p_{\max}} \leq \sum_{w\in
\partial \mathcal{C}} G_{\partial A_n}(u, w) \leq 1+ \frac{1}{p_{\min}} = O( r\lambda),
\mbox{ for all } u\in \partial \mathcal{C} \mbox{ and } \mathcal{C}
\in \mathfrak{C}\,.\end{equation} Combined with \eqref{eq-variance},this implies that
$$\Theta(1) \leq \var(\phi_v) =  O(\lambda)\,, \mbox{ for all } v\in U\,.$$
We also wish to bound the covariance between $\phi_v$ and $\phi_u$
for $u, v\in U$. Assume $u\in \mathcal{C}_i$ and $v\in
\mathcal{C}_j$ for $i\neq j$. By \eqref{eq-green-bound}, we see that
\begin{align}\label{eq-covariance}
\cov(\phi_u, \phi_v)& \leq O(1/r) \max_{x\in \mathcal{C}_i}
G_{\partial A_n}(x, \partial \mathcal{C}_j)\nonumber \leq O(1/r)
\max_{x\in \mathcal{C}_i} \P_x(\tau_{\partial \mathcal{C}_j} <
\tau_{\partial A_n}) \max_{y\in \partial\mathcal{C}_j} G_{\partial A_n}(y,
\partial \mathcal{C}_j)\nonumber\\
& \leq O(1/r) \max_{x\in \mathcal{C}_i} \P_x(\tau_{\partial \mathcal{C}_j} <
\tau_{\partial A_n}) \max_{y\in \partial\mathcal{C}_j} \sum_{z\in \partial \mathcal{C}_j}G_{\partial A_n}(y,
z)\nonumber\\
& \leq O(\lambda)\max_{x\in \mathcal{C}_i} \P_x(\tau_{\partial \mathcal{C}_j} <
\tau_{\partial A_n})\,.
\end{align}
We incorporate the estimate for the above hitting probability in the
next lemma.
\begin{lemma}\label{lem-hitting-probability}
For any $i\neq j$ and $x\in \mathcal{C}_i$, we have
$$\P_x( \tau_{\partial \mathcal{C}_j} < \tau_{\partial A_n}) \leq C\sqrt{r/\ell}\,,$$
where $C>0$ is a universal constant.
\end{lemma}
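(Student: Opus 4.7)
The plan is a two-case analysis based on an intermediate height scale $h = \sqrt{r\ell}$, which satisfies $r \ll h \ll \ell$ for $\lambda$ large (we may assume this, as otherwise $\sqrt{r/\ell} = \Theta(1)$ and the conclusion is trivial after adjusting $C$). Let $y_0 \le 3r$ denote the $y$-coordinate of the starting point $x \in \mathcal{C}_i$ (recall $o_i = (i\ell, 2r)$ and $\mathcal{C}_i$ has radius $r$). I would decompose $\{\tau_{\partial \mathcal{C}_j} < \tau_{\partial A_n}\}$ according to whether the walk's $y$-coordinate ever reaches $h$ before $\tau_{\partial A_n}$, obtaining events $E_1$ (it does) and $E_2$ (it does not, yet $\tau_{\partial \mathcal{C}_j} < \tau_{\partial A_n}$ still occurs).

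For $E_1$, the key observation is that the $y$-coordinate of 2D simple random walk is a lazy one-dimensional simple random walk, and the bottom edge $\{y = 0\}$ of $A_n$ is contained in $\partial A_n$. Hence $\tau_{\partial A_n} \le \tau_{Y=0}$, and the classical gambler's ruin estimate for the $y$-marginal gives
$$\P_x(E_1) \le \P_{y_0}(Y \text{ hits } h \text{ before } 0) = \frac{y_0}{h} \le \frac{3r}{\sqrt{r\ell}} = 3\sqrt{r/\ell}.$$

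For $E_2$, the walk is confined to the strip $\{0 < y < h\}$ while traversing horizontal distance at least $\ell - 2r \ge \ell/2$ (the minimum horizontal gap between any point of $\mathcal{C}_i$ and any point of $\mathcal{C}_j$, since $|i-j| \ge 1$). The exit time $\sigma$ of the $y$-coordinate from the strip satisfies $\P(\sigma > T) \le C e^{-cT/h^2}$ by standard spectral estimates for one-dimensional SRW in an interval. On the complementary event $\{\sigma \le T\}$, Azuma's inequality applied to the horizontal coordinate $X_s - X_0$ (a martingale with increments bounded by $1$) gives $\P(\max_{s \le T}|X_s - X_0| \ge \ell/2) \le 2 e^{-\ell^2/(8T)}$. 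Choosing $T = Kh^2$ with $K = \Theta(\ell/h)$ balances the two bounds and yields
$$\P_x(E_2) \le C e^{-c\sqrt{\ell/r}},$$
which is far smaller than $\sqrt{r/\ell}$ once $\lambda \ge \lambda_0$ is large enough. Adding the two estimates completes the proof. The main technical point is handling $E_2$: one must combine the strip-survival tail with Azuma's inequality under a random stopping time, which is accomplished by the deterministic truncation at $T = \Theta(h\ell)$ described above.
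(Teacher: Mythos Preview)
Your argument is correct, but it differs from the paper's route in the choice of splitting variable. The paper works directly with a \emph{time} threshold $t^\star = r\ell$ rather than a height threshold: writing $T_X$ for the first time $|X_t-X_0|\ge \ell/2$ and $T_Y$ for the first time $Y_t=0$, one has $\{\tau_{\partial \mathcal{C}_j} < \tau_{\partial A_n}\}\subseteq\{T_X<T_Y\}\subseteq\{T_Y\ge t^\star\}\cup\{T_X\le t^\star\}$. The first event has probability $\le C\sqrt{r/\ell}$ by the reflection-principle survival estimate for a one-dimensional walk started at height $\le 3r$, and the second has probability $\le \exp(-c\,\ell/r)$ by an Azuma-type bound on the horizontal displacement. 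This recovers the same dominant term $\sqrt{r/\ell}$ without introducing the intermediate height $h=\sqrt{r\ell}$ or the strip-exit time $\sigma$.

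What each approach buys: the paper's argument is shorter and uses only the survival-time tail plus a maximal inequality, at the cost of invoking the reflection-principle bound $\P_y(T_Y>t)\lesssim y/\sqrt{t}$. Your decomposition replaces that analytic input with the exact gambler's-ruin identity $\P_{y_0}(\text{hit }h\text{ before }0)=y_0/h$, which is arguably more elementary, but then you must handle $E_2$ with a two-step bound (strip survival plus Azuma) and balance the truncation time $T=\Theta(h\ell)$. Both routes yield the same leading term and an exponentially negligible remainder.
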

\begin{proof}
We consider the projection of the random walk to the horizontal and
vertical axes, and denote them by $(X_t)$ and $(Y_t)$
respectively. Define
$$T_{_X} = \min\left\{t: |X_t - x| \geq \ell/2 \right\}\,, \mbox{ and } T_{_Y} = \min\{t: Y_t = 0\}\,.$$
It is clear that $\tau_{\partial A_n}\leq T_{_Y}$ and $T_{_X} \leq
\tau_{\partial \mathfrak{C} \setminus \partial \mathcal{C}_i}$.
Write $t^\star = r \ell$. Since the number of steps spent on waling in the horizontal (vertical) axis is a Binomial distribution with parameter $t$ and $1/2$, an application of CLT yields that with probability at least $1-
\exp(-c t^\star)$ (here $c>0$ is an absolute constant) the number of such steps is at least $t^\star/3$ (and thus, at
most $2t^\star/3$). Combined with standard estimates for
1-dimensional random walks (see, e.g., \cite[Thm. 2.17, Lemma
2.21]{LPW09}), it follows that for a universal constant $C>0$
$$\P(T_{_Y} \geq t^\star) \leq C \sqrt{r/\ell}\,.$$
Using Markov property for random walk, we see that
$$\P(T_{_X} \leq t^\star) \leq (\P(T_{_X} \leq \ell^2))^{t^\star /\ell^2} \leq \epsilon^{r/\ell}\,,$$
where $\epsilon<1$ is an absolute constant. This completes the proof.
\end{proof}
Combining the preceding lemma and \eqref{eq-covariance}, we obtain
that (here we assume that $\lambda_0$ is large enough)
$$\cov(\phi_u, \phi_v) = O(\lambda \sqrt{r/\ell})\,, \mbox{ for all }u, v\in U\,.$$
Therefore, we have the following bounds on the correlation
coefficients $\rho_{u, v}$:
\begin{equation}\label{eq-correlation-coeff}
0\leq \rho_{u, v} = O(\lambda \sqrt{r/\ell})\,, \mbox{ for all } u \neq v\in
U\,.
\end{equation}
At this point, we wish to apply Slepian's \cite{slepian62}
comparison theorem (see, also, \cite{Fernique75, LT91}).
\begin{theorem}\label{thm-slepian}
If $\{\xi_i: 1\leq i \leq n\}$ and $\{\zeta_i: 1\leq i\leq n\}$ are
two mean zero Gaussian process such that
\begin{equation}\label{eq-assumption}
\var \xi_i = \var \zeta_i, \mbox{ and } \cov (\xi_i, \xi_j) \leq
\cov(\zeta_i, \zeta_j) \mbox{ for all } 1\leq i, j \leq
n\,.\end{equation} Then for all real numbers $\lambda_1, \ldots,
\lambda_n$,
$$\P(\xi_i \leq \lambda_i \mbox{ for all } 1\leq i\leq n) \leq \P(\zeta_i \leq \lambda_i \mbox{ for all } 1\leq i\leq n)\,.$$
\end{theorem}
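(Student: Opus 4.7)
The plan is to prove this by the classical Gaussian interpolation argument due to Slepian himself. First I would realize $\xi$ and $\zeta$ as independent vectors on a common probability space and interpolate by setting $Z(t) = \sqrt{t}\,\xi + \sqrt{1-t}\,\zeta$ for $t \in [0,1]$. Then $Z(t)$ is a mean zero Gaussian vector whose covariance matrix $\Sigma(t) = t\,\Sigma_\xi + (1-t)\,\Sigma_\zeta$ interpolates linearly between $\Sigma_\zeta$ at $t=0$ and $\Sigma_\xi$ at $t=1$; crucially, the difference $\Sigma_\xi - \Sigma_\zeta$ has vanishing diagonal (equality of variances) and nonpositive off-diagonal entries (hypothesis).

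Since the indicator is not smooth, I would replace it by a tensor product of smooth surrogates. For each $\eps > 0$ pick a smooth nonincreasing $f_\eps:\R \to [0,1]$ approximating $\mathbf{1}_{(-\infty,0]}$ pointwise, and set
$$F_\eps(t) = \E\Big[\prod_{i=1}^n f_\eps(Z_i(t) - \lambda_i)\Big].$$
The main computation is to differentiate $F_\eps$ in $t$ via Gaussian integration by parts (equivalently, the multivariate heat-equation identity $\partial_t p_{\Sigma(t)} = \tfrac{1}{2}\sum_{i,j}[\Sigma_\xi - \Sigma_\zeta]_{ij}\,\partial_{z_i}\partial_{z_j} p_{\Sigma(t)}$ for the Gaussian density). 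After integrating by parts twice in $z$ and using that the diagonal entries of $\Sigma_\xi - \Sigma_\zeta$ vanish, one arrives at
$$F_\eps'(t) = \sum_{i \neq j}[\Sigma_\xi - \Sigma_\zeta]_{ij}\,\E\Big[f_\eps'(Z_i(t) - \lambda_i)\,f_\eps'(Z_j(t) - \lambda_j)\prod_{k \neq i,j} f_\eps(Z_k(t) - \lambda_k)\Big].$$
Each factor $f_\eps(\cdot)$ is nonnegative, and $f_\eps'(Z_i - \lambda_i)\,f_\eps'(Z_j - \lambda_j) \geq 0$ since $f_\eps' \leq 0$; combined with $[\Sigma_\xi - \Sigma_\zeta]_{ij} \leq 0$ for $i \neq j$, every summand is nonpositive. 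Hence $F_\eps$ is nonincreasing on $[0,1]$, so $F_\eps(1) \leq F_\eps(0)$, and letting $\eps \downarrow 0$ with dominated convergence yields the desired comparison.

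The main obstacle is the derivative identity: one has to carefully justify interchanging differentiation and expectation and execute the double integration by parts so that the diagonal contributions cancel, leaving precisely the off-diagonal entries of $\Sigma_\xi - \Sigma_\zeta$ paired with products $f_\eps'(\cdot)\,f_\eps'(\cdot)$ of the correct sign. Once this identity is in hand, the rest is pure monotonicity plus a routine limiting argument, and no further probabilistic input about $\xi$ or $\zeta$ is needed beyond the assumed moment conditions.
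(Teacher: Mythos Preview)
Your interpolation argument is the classical proof of Slepian's inequality and is correct in outline; the sign analysis is right ($f_\eps'\le 0$ makes each product $f_\eps'(\cdot)f_\eps'(\cdot)\ge 0$, and the off-diagonal entries of $\Sigma_\xi-\Sigma_\zeta$ are $\le 0$, so $F_\eps'\le 0$ and $F_\eps(1)\le F_\eps(0)$ gives the desired inequality). One should also note, for completeness, that $\Sigma(t)$ may be singular, which is handled by adding an independent $\eps'$-multiple of a standard Gaussian and sending $\eps'\to 0$ at the end.

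As for comparison with the paper: there is nothing to compare. The paper does not prove Theorem~\ref{thm-slepian}; it quotes it as a known result with references to Slepian, Fernique, and Ledoux--Talagrand. Your sketch is precisely the standard proof one finds in those references.
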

The following is an immediate consequence.
\begin{cor}
Let $\{\xi_i: 1\leq i\leq n\}$ be a mean zero Gaussian process such
that the correlation coefficients satisfy $0\leq \rho_{i, j} \leq \rho\leq
1/2$ for all $1\leq i< j\leq n$. Then,
$$\P(\xi_i\leq 0, \mbox{ for all } 1\leq i\leq n ) \leq \mathrm{e}^{-1/(2\rho)} + (9/10)^n\,.$$
\end{cor}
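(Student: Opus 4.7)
The plan is to use Slepian's comparison theorem (Theorem~\ref{thm-slepian}) to replace the $\xi_i$ by a Gaussian process with \emph{constant} pairwise correlation exactly $\rho$, for which the joint law has an explicit one-parameter structure. Without loss of generality normalize $\var \xi_i = 1$, and let $\{\zeta_i\}$ be mean-zero Gaussian with $\var \zeta_i = 1$ and $\cov(\zeta_i,\zeta_j) = \rho$ for $i\neq j$. The hypothesis $\rho_{i,j}\le\rho$ gives $\cov(\xi_i,\xi_j)\le\cov(\zeta_i,\zeta_j)$, so Theorem~\ref{thm-slepian} applied with all thresholds equal to $0$ yields
\[
\P(\xi_i\le 0\ \forall i) \le \P(\zeta_i\le 0\ \forall i).
\]

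The key structural fact is that such a constantly correlated process can be realized as $\zeta_i = \sqrt{\rho}\,Z + \sqrt{1-\rho}\,Y_i$ with $Z, Y_1,\ldots,Y_n$ i.i.d.\ standard Gaussians. Conditioning on $Z$ makes the events $\{\zeta_i\le 0\} = \{Y_i \le -Z\sqrt{\rho/(1-\rho)}\}$ independent, giving
\[
\P(\zeta_i\le 0\ \forall i) = \E\bigl[\Phi\bigl(-Z\sqrt{\rho/(1-\rho)}\bigr)^n\bigr],
\]
where $\Phi$ is the standard normal CDF.

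I would then split the expectation on the two events $\{Z<-c\}$ and $\{Z\ge -c\}$ for a threshold $c=c(\rho)$ of order $1/\sqrt{\rho}$. On the rare event $\{Z<-c\}$, the standard Gaussian tail bound contributes $\P(Z<-c)\le e^{-c^2/2}$, which for $c=1/\sqrt{\rho}$ matches the first term $e^{-1/(2\rho)}$. On the complement, $-Z\sqrt{\rho/(1-\rho)}\le c\sqrt{\rho/(1-\rho)} = 1/\sqrt{1-\rho}\le\sqrt{2}$, using $\rho\le 1/2$, so $\Phi\bigl(-Z\sqrt{\rho/(1-\rho)}\bigr)$ is bounded by an absolute constant strictly less than $1$; after a mild adjustment of the split point $c$ (balancing $c^2/2$ against $\Phi^{-1}$ of the desired constant) this contribution becomes $(9/10)^n$.

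No deep obstacle is expected: the Slepian comparison is immediate once the auxiliary process $\{\zeta_i\}$ is introduced, and the remainder is a routine computation with the normal CDF conditioned on the common factor $Z$. The only point requiring a little care is tuning the threshold $c$ so that both pieces of the split produce exactly the constants asserted in the statement; this is a matter of bookkeeping rather than genuine difficulty.
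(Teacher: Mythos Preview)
Your approach is essentially the same as the paper's: normalize variances, apply Slepian's inequality against the exchangeable process $\zeta_i=\sqrt{\rho}\,Z+\sqrt{1-\rho}\,Y_i$, and split according to whether the common factor $Z$ is below $-1/\sqrt{\rho}$ or not. The paper phrases the final step as the set inclusion $\{\zeta_i\le 0\ \forall i\}\subseteq\{Z\le -1/\sqrt{\rho}\}\cup\{Y_i\le 1/\sqrt{1-\rho}\ \forall i\}$ followed by a union bound, which is exactly your conditioning argument in disguise.
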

\begin{proof}
Since we are comparing $\xi_i$'s with zero, it allows us to assume
that $\var \xi_i = 1$ for all $1\leq i\leq n$. Let $\zeta_i =
\sqrt{\rho} X + \sqrt{1-\rho^2} Y_i$ where $X$ and $Y_i$'s are
i.i.d.\ standard Gaussian variables. It is clear that our processes
$\{\xi_i: 1\leq i\leq n\}$ and $\{\zeta_i: 1\leq i \leq n\}$ satisfy
\eqref{eq-assumption}. By Theorem~\ref{thm-slepian}, we obtain that
$$\P(\xi_i \leq 0 \mbox{ for all } 1\leq i\leq n) \leq \P(\zeta_i \leq 0 \mbox{ for all } 1\leq i\leq n)\,.$$
Since $\{\zeta_i \leq 0 \mbox{ for all } 1\leq i\leq n\} \subseteq
\{X \leq -1/\sqrt{\rho}\} \cup \{Y_i \leq 1/\sqrt{1-\rho^2} \mbox{
for all }1\leq i\leq n\}$, we have
\begin{align*}\P(\zeta_i \leq 0 \mbox{ for all } 1\leq i\leq n)
&\leq \P(X \leq -1/\sqrt{\rho}) + \P(Y_i
\leq 1/\sqrt{1-\rho^2} \mbox{ for all }1\leq i\leq n)\\
& \leq \mathrm{e}^{-1/(2\rho)} + (9/10)^n\,.
\end{align*}
Altogether, this completes the proof.
\end{proof}
\begin{proof}[\emph{\textbf{Proof of Lemma~\ref{lem-positive-in-U}}}]
Recall definitions of $r$, $\ell$ and $m$. The desired estimate
follows from an application of the preceding corollary to $\{\phi_v:
v\in U\}$ and the correlation bounds \eqref{eq-correlation-coeff} (here we assume that $\lambda$ is large enough such that $\rho_{u, v}\leq 1/2$ for all $u\neq v$).
\end{proof}
Combining Lemma~\ref{lem-positive-in-U} and \eqref{eq-W}, we finally
complete the proof for the upper bound on the left tail as in
\eqref{eq-left-upper}.

\subsection{Lower bound on the left tail}
In this subsection, we study the lower bound for the lower tail of
the maximum and show that for absolute constants $C , c , n_0, \lambda_0> 0$
\begin{equation}\label{eq-left-lower}
\P(M_n - \E M_n \leq -\lambda) \geq c \mathrm{e}^{-C
\mathrm{e}^{C\lambda}}\,, \mbox{ for all }n\geq n_0 \mbox{ and } \lambda_0 \leq
\lambda \leq (\log n)^{2/3}\,.
\end{equation}

The proof consists of two steps: (1) We estimate the probability for
$\sup_{v\in B} \eta_v \leq \E M_n - \lambda$ for a small box $B$ in
$A_n$. (2) Applying FKG inequality for GFF, we bootstrap the
estimate on a small box to the whole box.

By Theorem~\ref{thm-BZ}, there exists an absolute constant $\alpha^*
> 0$ such that
\begin{equation}\label{eq-def-alpha*}
\P(M_n \leq \E M_n + \alpha^*) \geq 3/4 \mbox{ for all } n\in
\mathbb{N}\,.
\end{equation}
We first consider the behavior of GFF in a box of side length
$\ell$, where
\begin{equation}\label{eq-def-ell}
\ell \deq n \mathrm{e}^{-10(\lambda + \kappa + \alpha^* +
2)}\,.\end{equation}
\begin{lemma}\label{lem-smaller-box}
Let $B\subseteq A_n$ be a box of side length $\ell$. Then,
$$\P(\sup_{v\in B} \eta_v \leq \E M_n - \lambda) \geq 1/2\,.$$
\end{lemma}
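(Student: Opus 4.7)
The plan is to exploit the Markov structure of the GFF: decompose $\eta$ on a slight enlargement of $B$ into an independent local GFF and a smoother harmonic extension, control the former by Theorem~\ref{thm-BZ} and the latter by Gaussian concentration, then combine. Specifically, let $B'$ be a concentric box of side length $3\ell$ (intersected with $A_n$ if necessary; the boundary case is strictly easier since the Dirichlet condition forces $\eta\equiv 0$ on $\partial A_n$, so we assume $B' \subseteq A_n$). Write
\[\eta_v = g_v^{B'} + \phi_v^{B'}, \quad v \in B',\]
with $g^{B'}$ an independent GFF on $B'$ and $\phi_v^{B'} = \E(\eta_v \mid \{\eta_u : u \in \partial B'\})$ harmonic. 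Then $\sup_{v\in B}\eta_v \leq \sup_{v \in B'} g_v^{B'} + \sup_{v \in B}\phi_v^{B'}$.

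The first supremum is distributed as $M_{3\ell}$, so by \eqref{eq-def-alpha*} it is at most $\E M_{3\ell} + \alpha^*$ with probability at least $3/4$. For the second, the goal will be to show $\sup_{v \in B} \phi_v^{B'} \leq C\sqrt\lambda$ with probability at least $3/4$. Using independence of $g^{B'}$ and $\phi^{B'}$, together with \eqref{eq-expectation-difference} and the generous factor $10$ in the definition of $\ell$ (which, since $2\sqrt{2/\pi} > 3/2$, yields $\E M_n - \E M_{3\ell} \geq 15(\lambda + \kappa + \alpha^* + 2) - O(1)$), this gives $\sup_{v \in B}\eta_v \leq \E M_n - \lambda$ with probability at least $9/16 > 1/2$ for $\lambda \geq \lambda_0$ large enough.

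The control on $\phi^{B'}$ rests on two ingredients from Lemma~\ref{lem-green-function}: the pointwise bound $\var\phi_v^{B'} = G_{\partial A_n}(v,v) - G_{\partial B'}(v,v) \leq \tfrac{2}{\pi}\log(n/\ell) + O(1) = O(\lambda)$, and the incremental bound $\var(\phi_v^{B'} - \phi_{v'}^{B'}) = O(|v-v'|/\ell)$ for $v, v' \in B$. The incremental bound is the crux: it relies on the cancellation of the leading $\tfrac{4}{\pi}\log|v-v'| + 2c_0$ contribution between $\var(\eta_v - \eta_{v'})$ (computed on $A_n$) and $\var(g_v^{B'} - g_{v'}^{B'})$ (computed on $B'$), since this leading order depends only on local structure and not on the ambient domain. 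Once established, Dudley's entropy bound yields $\E \sup_{v \in B} \phi_v^{B'} = O(1)$, and Borell--TIS concentration with variance proxy $O(\lambda)$ converts it into the tail bound $\P(\sup_{v \in B} \phi_v^{B'} \geq C\sqrt\lambda) \leq 1/4$. The main obstacle is this smoothness estimate: one must carefully track the error terms in the potential-kernel expansion from Lemma~\ref{lem-green-function} to extract the logarithmic cancellation, after which the remaining Gaussian tail machinery is routine.
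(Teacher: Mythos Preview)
Your approach is correct but genuinely different from the paper's. The paper does not decompose $\eta$ via the Markov property and then control the harmonic piece by chaining; instead it compares $\{\eta_v : v\in B\}$ directly with a local GFF $\{g_v : v\in B\}$ on $\hat B = B'\cap A_n$ (with $B'$ a concentric box of side $2\ell$) via Slepian's inequality. Two short lemmas do the work: the correlation coefficients satisfy $\rho_{u,v}\ge \hat\rho_{u,v}$ (immediate from $\tau_{\partial\hat B}\le\tau_{\partial A_n}$), and the variance ratio obeys $\var\eta_v/\var g_v\le 1+(1+o(1))(\log(n/\ell)+O(1))/\log n$. Rescaling $\xi_v=\eta_v/b_v$ to match variances, Slepian gives $\P(\sup_B\xi_v\le\gamma)\ge\P(\sup_B g_v\le\gamma)$; taking $\gamma=\E M_{2\ell}+\alpha^*$ and undoing the rescaling (which costs only $O(\lambda)$ since $\E M_{2\ell}\asymp\log n$ and the multiplicative error is $O(\lambda/\log n)$) yields the result. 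The non-square case $\hat B\subsetneq B'$ is handled by a separate monotonicity lemma.

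What each buys: the paper's route avoids any regularity analysis of $\phi$ --- no increment bound, no Dudley, no Borell--TIS --- at the price of invoking Slepian and a variance-ratio computation. Your route is more robust (it would transfer to settings without a clean Slepian comparison) and makes the role of the harmonic extension's smoothness explicit; the increment bound $\var(\phi_v-\phi_{v'})=O(|v-v'|/\ell)$ is indeed correct for $B$ at distance $\ge\ell$ from $\partial A_n$, by exactly the potential-kernel cancellation you describe, and gives $d$-diameter $O(1)$, hence $\E\sup_B\phi^{B'}=O(1)$ via Dudley as you claim. One point you pass over too quickly: when $B$ abuts $\partial A_n$ the statement ``the boundary case is strictly easier'' is not literally obvious --- the local field $g^{B'\cap A_n}$ is no longer distributed as $M_{3\ell}$, and your increment estimate for $\phi$ needs the buffer $\dist(B,\partial B')\ge\ell$. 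This can be repaired (the paper's Lemma~\ref{lem-monotone} handles the first issue, and near $\partial A_n$ the variances of both $\eta$ and $\phi$ only shrink), but it deserves a sentence rather than a parenthetical.
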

In order to prove the lemma, let $B'$ be a box of side length
$2\ell$ that has the same center as $B$, and let $\hat{B} = B' \cap
A_n$. Consider the GFF $\{g_v: v\in \hat{B}\}$ on $\hat{B}$ with
Dirichlet boundary condition (on $\partial \hat{B}$). We wish to
compare $\{\eta_v: v\in B\}$ with $\{g_v: v\in B\}$. For $u, v\in
B$, let
$$\rho_{u, v} = \frac{\cov(\eta_u, \eta_v)}{\sqrt{\var \eta_u \, \var \eta_v}} \mbox{ and }  \hat{\rho}_{u, v} = \frac{\cov(g_u, g_v)}{\sqrt{\var g_u \, \var g_v}}$$
be the correlations coefficients of two GFFs under consideration.
\begin{lemma}\label{lem-correlation-coeff}
For all $u, v\in B$, we have $\rho_{u, v} \geq \hat{\rho}_{u, v}$
for all $u, v \in B$.
\end{lemma}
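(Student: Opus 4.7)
The plan is to translate both correlation coefficients into random walk hitting probabilities, so that the comparison between the two domains reduces to a trivial monotonicity of stopping times. Concretely, using $\var\eta_v = G_{\partial A_n}(v,v)$ and $\cov(\eta_u,\eta_v) = G_{\partial A_n}(u,v)$, together with the strong Markov identity $G_U(u,v) = \P_u(\tau_v < \tau_U)\, G_U(v,v)$ (obtained by decomposing each visit to $v$ as a first visit followed by an independent sequence of returns), one derives the symmetric representation
$$\rho_{u,v} \;=\; \sqrt{\P_u(\tau_v<\tau_{\partial A_n})\;\P_v(\tau_u<\tau_{\partial A_n})},$$
and analogously $\hat\rho_{u,v} = \sqrt{\P_u(\tau_v<\tau_{\partial \hat B})\,\P_v(\tau_u<\tau_{\partial \hat B})}$.

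The geometric input is the inclusion $\partial A_n\cap \hat B \subseteq \partial \hat B$: if $w\in\hat B$ has a neighbor outside $A_n$, then that neighbor also lies outside $\hat B = B'\cap A_n$, so $w\in\partial \hat B$. Since $\partial \hat B$ is an inner boundary of $\hat B$, a random walk started at an interior point $u\in B$ must cross $\partial \hat B$ before it can leave $\hat B$, and the only way to hit $\partial A_n$ without first leaving $\hat B$ is to land in $\partial A_n\cap \hat B\subseteq \partial \hat B$. Either way, $\tau_{\partial \hat B}\le\tau_{\partial A_n}$ almost surely, and consequently
$$\{\tau_v<\tau_{\partial \hat B}\}\;\subseteq\;\{\tau_v<\tau_{\partial A_n}\},$$
which gives $\P_u(\tau_v<\tau_{\partial A_n})\ge\P_u(\tau_v<\tau_{\partial \hat B})$, and similarly with $u$ and $v$ interchanged.

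Multiplying the two such inequalities and taking a square root yields $\rho_{u,v}\ge\hat\rho_{u,v}$, completing the proof. I do not expect any real obstacle: once the hitting-probability formula for $\rho_{u,v}$ is in place, the lemma reduces to the statement that enlarging the killing set from $\partial \hat B$ to $\partial A_n$ (i.e., pushing the killing set farther from $u,v$) can only increase each hitting probability, which is an immediate monotonicity of stopping times under domain inclusion.
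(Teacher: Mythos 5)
Your proof is correct and matches the paper's argument essentially line for line: both derive the representation $\rho_{u,v} = \sqrt{\P_u(\tau_v<\tau_{\partial A_n})\,\P_v(\tau_u<\tau_{\partial A_n})}$ via the strong Markov identity $G_U(u,v)=\P_u(\tau_v<\tau_U)G_U(v,v)$, and then invoke the domain monotonicity $\tau_{\partial\hat B}\le\tau_{\partial A_n}$. Your extra remark spelling out why $\partial A_n\cap\hat B\subseteq\partial\hat B$ is a slightly more explicit justification of that monotonicity than the paper gives, but it is the same proof.
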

\begin{proof}
Since by definition $\hat{B} \subset A_n$, we see that
$\tau_{\partial \hat{B}} \leq \tau_{\partial A_n}$ deterministically
for a random walk started from an arbitrary vertex in $B$. Note that
$$G_{\partial A_n} (u, v) = \P_u(\tau_v < \tau_{\partial A_n}) G_{\partial A_n}(v, v) \,\mbox{ and }\, G_{\partial \hat{B}} (u, v) = \P_u(\tau_v < \tau_{\partial \hat{B}}) G_{\partial \hat{B}}(v, v)$$
Altogether, we obtain that
\begin{equation*}
\rho_{u, v} = \sqrt{\P_u(\tau_v < \tau_{\partial A_n}) \P_v(\tau_u <
\tau_{\partial A_n})} \geq \sqrt{\P_u(\tau_v < \tau_{\partial
\hat{B}}) \P_v(\tau_u < \tau_{\partial \hat{B}})} = \hat{\rho}_{u,
v}\,. \qedhere\end{equation*}
\end{proof}
We next compare the variances for the two GFFs.
\begin{lemma}\label{lem-variances}
For all $v\in B$, we have that
$$\var \eta_v \leq \Big(1 + \frac{(1+o(1)(\log
(n/\ell) + O(1))}{\log n}\Big) \var g_v\,.$$
\end{lemma}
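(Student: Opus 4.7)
The plan is to rewrite the variance ratio $\var\eta_v/\var g_v = G_{\partial A_n}(v,v)/G_{\partial \hat B}(v,v)$ in the form $1/(1-p_v)$, where $p_v$ is the probability that a random walk started at $v$ returns to $v$ after first exiting $\hat B$ but before hitting $\partial A_n$, and then to bound $p_v$ using Lemma~\ref{lem-annulus}.

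First, since $\hat B \subseteq A_n$, the strong Markov property at $\tau_{\partial \hat B}$ yields
$$G_{\partial A_n}(v,v) = G_{\partial \hat B}(v,v) + \E_v\bigl[G_{\partial A_n}(S_{\tau_{\partial \hat B}}, v)\bigr].$$
Applying the first-passage identity $G_{\partial A_n}(w,v) = \P_w(\tau_v < \tau_{\partial A_n})\,G_{\partial A_n}(v,v)$ (valid for $w\ne v$) at $w = S_{\tau_{\partial \hat B}}$, this rearranges to $G_{\partial \hat B}(v,v) = (1-p_v)\,G_{\partial A_n}(v,v)$ with $p_v = \E_v[\P_{S_{\tau_{\partial \hat B}}}(\tau_v < \tau_{\partial A_n})]$. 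Since $\log(n/\ell) = O(\lambda) = o(\log n)$ in the regime under consideration, once I show $p_v \le (\log(n/\ell)+O(1))/\log n$, the elementary expansion $1/(1-p_v) \le 1 + (1+o(1))\,p_v$ immediately gives the lemma.

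To bound $p_v$ I split on the exit location. If $S_{\tau_{\partial \hat B}} \in \partial A_n$ then the conditional probability vanishes (since $\tau_{\partial A_n}=0$ there), so only $w = S_{\tau_{\partial \hat B}} \in \partial B' \setminus \partial A_n$ contributes; for such $w$, the fact that $B$ is the centered subbox of $B'$ forces $|w-v| \ge \ell/2$. I then use the two trivial inclusions $\{v\} \subseteq \mathcal{C}_v(1)$ and $A_n \subseteq \mathcal{C}_v(2n)$ (the latter because $A_n$ has diameter at most $\sqrt{2}\,n$ and $v \in A_n$), which give $\tau_{\partial A_n} \le \tau_{\partial \mathcal{C}_v(2n)}$ and $\tau_v \ge \tau_{\partial \mathcal{C}_v(1)}$, so that
$$\P_w(\tau_v < \tau_{\partial A_n}) \le \P_w\bigl(\tau_{\partial \mathcal{C}_v(1)} < \tau_{\partial \mathcal{C}_v(2n)}\bigr) = \frac{\log(2n/|w-v|) + O(1)}{\log(2n)} \le \frac{\log(n/\ell) + O(1)}{\log n}$$
by Lemma~\ref{lem-annulus} with $k=1$ (recentered at $v$).

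The argument is conceptually direct; the only subtle point is that the bound must be uniform in the position of $v \in B$, which may lie near $\partial A_n$. This uniformity is built into the construction: the enclosing ball $\mathcal{C}_v(2n)$ always contains $A_n$ regardless of where $v$ sits, and the contribution to $p_v$ from exits through $\partial \hat B \cap \partial A_n$ is identically zero, so no separate casework for boundary $v$ is needed.
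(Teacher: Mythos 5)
Your proof is correct, and while it starts from the same strong-Markov decomposition $G_{\partial A_n}(v,v) = G_{\partial \hat B}(v,v) + \E_v[G_{\partial A_n}(S_{\tau_{\partial \hat B}},v)]$ and the same observation that exits through $\partial \hat B \cap \partial A_n$ contribute nothing, you package the final estimate differently. The paper bounds the numerator correction and the denominator $G_{\partial \hat B}(v,v)$ separately using the potential-kernel expansion of Lemma~\ref{lem-green-function}, and has to note that the escape probability through $\partial \hat B \setminus \partial A_n$ appears in both so that it cancels in the ratio. You instead apply the first-passage identity to the correction term, which collapses the ratio to the exact formula $\var \eta_v / \var g_v = 1/(1-p_v)$ with $p_v$ a return probability; the problematic near-boundary case is then handled automatically, since $p_v$ is a genuine probability that only sees exits through $\partial \hat B \setminus \partial A_n$. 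Bounding $p_v$ by the annulus escape probability (Lemma~\ref{lem-annulus} with inner radius $1$ and outer radius $2n$, recentered at $v$) replaces the explicit $a(\cdot,\cdot)$ asymptotics. The two routes use the same underlying logarithmic estimates, but yours is arguably cleaner: the identity $1/(1-p_v)$ is exact, and you correctly get the sharper $|w-v| \geq \ell/2$ (the paper's claim of $\geq \ell$ is a harmless overstatement that does not affect the $O(1)$). One small omission: you should note that for $v \in B \cap \partial A_n$ the statement is trivial ($\var \eta_v = \var g_v = 0$), so the first-passage identity may be applied only for $v$ in the interior; this is implicit but worth saying, since for boundary $v$ one has $v \in \partial \hat B$ and the identity $G_{\partial A_n}(w,v) = \P_w(\tau_v < \tau_{\partial A_n})G_{\partial A_n}(v,v)$ degenerates to $0=0$.
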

\begin{proof}
It suffices to compare the Green functions $G_{\partial A_n}(v, v)$
and $G_{\partial \hat{B}}(v, v)$. We can decompose them in terms of
the hitting points to $\partial \hat{B}$ and obtain that
$$G_{\partial A_n} (v, v) = G_{\partial \hat{B}}(v, v) + \sum_{w\in \partial \hat{B}}\P_v(\tau_w = \tau_{\partial \hat{B}}) G_{\partial A_n}(w, v)\,.$$
Note that for $w\in \partial \hat{B} \cap \partial A_n$, we have
$G_{\partial A_n}(w, v) = 0$. For $w\in \partial \hat{B} \setminus
\partial A_n$, we see that $|v-w| \geq \ell$ by our
definition of $\hat{B}$. Therefore, by
Lemma~\ref{lem-green-function}, we have
$$G_{\partial A_n}(w, v) \leq \tfrac{2}{\pi} \log (n/\ell) + O(1)\,.$$
Since $|v-w|\geq \ell$ for $w\in \partial \hat{B} \setminus \partial
A_n$, Lemma~\ref{lem-green-function} gives that
$$G_{\partial \hat{B}}(v, v) = \sum_{w\in \partial \hat{B} \setminus \partial A_n}  \P_v(\tau_w = \tau_{\partial \hat{B}}) \cdot a(w, v)  \geq \big(\tfrac{2}{\pi} +o(1)\big) \log n \sum_{w\in \partial \hat{B} \setminus \partial A_n}  \P_v(\tau_w = \tau_{\partial \hat{B}}) \,,$$
where we used the assumption that $\lambda \leq (\log n)^{2/3}$.
Altogether, we get that $$G_{\partial A_n} (v, v) \leq \big(1 +
\tfrac{(1+o(1)) (\log (n/\ell) + O(1))}{\log n}\big) G_{\partial
\hat{B}} (v, v)\,,$$ completing the proof.
\end{proof}
We will need the following lemma to handle some technical issues.
\begin{lemma}\label{lem-monotone}
For a graph $G = (V, E)$, consider $V_1\subset V_2 \subset V$. Let
$\{\eta^{(1)}_v\}_{v\in V}$ and $\{\eta^{(2)}_v\}_{v\in V}$ be GFFs
on $V$ such that $\eta^{(1)}|_{V_1} = 0$ and $\eta^{(2)}|_{V_2} =
0$, respectively. Then for any number $t \in \mathbb{R}$
$$\P(\mbox{$\sup_{v\in U}$}\eta^{(1)}_v \geq t) \geq \tfrac{1}{2} \P(\mbox{$\sup_{v\in U}$}\eta^{(2)}_v \geq t)\,.$$
\end{lemma}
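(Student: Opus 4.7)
The plan is to realise $\eta^{(1)}$ as the sum of a copy of $\eta^{(2)}$ and an \emph{independent} mean-zero Gaussian ``harmonic correction'', and then extract the factor $\tfrac12$ from the symmetry of that correction at the (random) site where $\eta^{(2)}$ attains a value $\geq t$.

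Concretely, since $V_1\subset V_2$, I would apply the Gibbs--Markov property of the GFF (Lemma~\ref{lem-DGFF}) to $V_2$ and write
$$\varphi_v \df \E\bigl(\eta^{(1)}_v \bigm| \{\eta^{(1)}_u : u\in V_2\}\bigr), \qquad \tilde\eta_v \df \eta^{(1)}_v - \varphi_v.$$
Because $(\eta^{(1)}_v)_{v\in V}$ is jointly Gaussian, $\tilde\eta$ is independent of $\varphi$; a covariance computation (equivalently, Dynkin's identity in Lemma~\ref{lem-DGFF} applied to $\tau_{V_2}$) shows that $\tilde\eta$ has the law of a GFF with Dirichlet boundary on $V_2$, while $\varphi$ is a mean-zero Gaussian field vanishing on $V_1$. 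Coupling on a common probability space, I may therefore assume $\tilde\eta = \eta^{(2)}$ and $\varphi\perp \eta^{(2)}$, so $\eta^{(1)} = \eta^{(2)} + \varphi$.

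On the event $A\df\{\sup_{v\in U}\eta^{(2)}_v\geq t\}$, pick an $\eta^{(2)}$-measurable site $v^\star\in U$ with $\eta^{(2)}_{v^\star}\geq t$; for finite $U$ this is automatic. On $A$,
$$\eta^{(1)}_{v^\star} \;=\; \eta^{(2)}_{v^\star} + \varphi_{v^\star} \;\geq\; t + \varphi_{v^\star},$$
so $A\cap\{\varphi_{v^\star}\geq 0\}\subseteq \{\sup_{v\in U}\eta^{(1)}_v\geq t\}$. Since $v^\star$ is $\eta^{(2)}$-measurable and $\varphi\perp \eta^{(2)}$, the conditional law of $\varphi_{v^\star}$ given $\eta^{(2)}$ is a centred Gaussian (possibly degenerate at $0$ if $v^\star\in V_1$, which only helps), so $\P(\varphi_{v^\star}\geq 0\mid \eta^{(2)})\geq \tfrac12$ pointwise on $A$. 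Integrating over $A$ yields $\P(\sup_{v\in U}\eta^{(1)}_v\geq t)\geq \tfrac12\,\P(A)$, which is the claim.

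I do not expect a genuine obstacle. The only mildly delicate point is the verification that the residual $\tilde\eta$ really has Green-function covariance $G_{V_2}$ (rather than a conditioned version), which is exactly the content of the Gibbs--Markov decomposition and is the same fact used repeatedly earlier in the paper, e.g.\ in~\eqref{eq-decomposition}. The measurable choice of $v^\star$ is the other trivial point and is handled by the finiteness of $U$.
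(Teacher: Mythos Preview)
Your proposal is correct and is essentially identical to the paper's own proof: both decompose $\eta^{(1)}=\eta^{(2)}+\varphi$ via the Gibbs--Markov property (the paper conditions on $V_2\setminus V_1$, which is the same as your conditioning on $V_2$ since $\eta^{(1)}$ vanishes on $V_1$), pick an $\eta^{(2)}$-measurable site in $U$ where $\eta^{(2)}\geq t$, and use that the independent centred Gaussian $\varphi$ at that site is nonnegative with probability at least $\tfrac12$. The only cosmetic difference is that the paper takes $v^\star$ to be the actual maximizer of $\eta^{(2)}$ on $U$ rather than any witness of $\{\eta^{(2)}\geq t\}$.
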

\begin{proof}
Note that the conditional covariance matrix of
$\{\eta^{(1)}_v\}_{v\in U}$ given the values of
$\{\eta^{(1)}_v\}_{v\in V_2\setminus V_1}$ corresponds to the
covariance matrix of $\{\eta^{(2)}_v\}_{v\in U}$. This implies that
$$\{\eta^{(1)}_v: v\in U\}\stackrel{law}{=} \{\eta^{(2)}_v + \E(\eta^{(1)}_v \mid \{\eta^{(1)}_u: u\in V_2 \setminus V_1\}):  v\in U\}\,,$$
where on the right hand side $\{\eta^{(2)}_v: v\in U\}$ is
independent of $\{\eta^{(1)}_u: u\in V_2\setminus V_1\}$. Write
$\phi_v = \E(\eta^{(1)}_v \mid \{\eta^{(1)}_u: u\in V_2 \setminus
V_1\})$. Note that $\phi_v$ is a linear combination of
$\{\eta^{(1)}_u: u\in V_2 \setminus V_1\}$, and thus a mean zero
Gaussian variable. By the above identity in law, we derive that
$$\P(\mbox{$\sup_{v\in U}$}\eta^{(1)}_v \geq t)  \geq \P(\eta^{(2)}_\xi + \phi_\xi \geq t) =  \tfrac{1}{2}\P(\eta^{(2)}_\xi \geq t) = \tfrac{1}{2} \P(\mbox{$\sup_{v\in U}$} \eta^{(2)}_v \geq t)\,,$$
where we denote by $\xi \in U$ the maximizer of $\{\eta^{(2)}_u:
u\in U\}$ and the second transition follows from the independence of
$\{\eta^{(1)}_v\}$ and $\{\phi_v\}$.
\end{proof}
We are now ready to give
\begin{proof}[\emph{\textbf{Proof of Lemma~\ref{lem-smaller-box}}}]
Write $b_v = \sqrt{\var \eta_v / \var g_v}$ for every $v\in B$. By
Lemma~\ref{lem-variances}, we see that $b_v \leq 1 +
(1/2+o(1))(\log(n/\ell)+O(1))/\log n$ for all $v\in B$. Consider the
Gaussian process defined by $\xi_v = \eta_v/b_v$. By
Lemma~\ref{lem-correlation-coeff}, we see that $\{\xi_v: v\in B\}$
and $\{g_v: v\in B\}$ satisfy the assumption in
Theorem~\ref{thm-slepian}, and thus
\begin{equation}\label{eq-prob-compare}
\P(\mbox{$\sup_{v\in B}$} \xi_v \leq \gamma) \geq
\P(\mbox{$\sup_{v\in B}$} g_v \leq \gamma) \,, \mbox{ for all }
\gamma \in \mathbb{R}\,.
\end{equation}
Plugging into $\gamma = \E M_{2\ell} + \alpha^*$ and using
\eqref{eq-def-alpha*} and Lemma~\ref{lem-monotone} (we need to use Lemma~\ref{lem-monotone} as the box $\hat{B}$ might not be a squared box of side-length $2\ell$ but a subset of that), we obtain that
$$\P(\mbox{$\sup_{v\in B}$} \xi_v \leq \E M_{2\ell} + \alpha^*) \geq \P(\mbox{$\sup_{v\in B}$} g_v \leq \E M_{2\ell} + \alpha^*) \geq \P(\mbox{$\sup_{v\in \hat B}$} g_v \leq \E M_{2\ell} + \alpha^*) \geq 1/2\,.$$
Also, By definition of $\ell$ and \eqref{eq-expectation-difference} as well as our assumption that $\lambda \leq (\log n)^{2/3}$,
we see that
$$\E M_n \geq \E M_{2\ell} + 2\sqrt{2/\pi} \log (n/\ell) - 10\,.$$
Therefore, for large constants $\lambda_0, n_0$, we can deduce that
\begin{align*}(1 + (1/2+o(1))(\log(n/\ell)+O(1))/\log n) (\E M_{2\ell} + \alpha^*) \leq \E M_{2\ell} +  \tfrac{2}{3}\log (n/\ell)  +1 \leq \E M_n - \lambda\,,\end{align*}
where we used Theorem~\ref{thm-BZ} and the definition of $\ell$ in \eqref{eq-def-ell}.
 Altogether, we deduce that
\begin{equation*}
\P(\mbox{$\sup_{v\in B}$} \eta_v \leq \E M_n - \lambda) \geq 1/2\,.
\qedhere
\end{equation*}
\end{proof}

Now, we wish to apply FKG inequality and obtain the estimate on the
probability $\sup_{v\in A_n} \eta_v \leq \E M_n - \lambda$. Pitt
\cite{Pitt82} proves that the FKG inequality holds for a Gaussian
process with nonnegative covariances. Since clearly the GFF has
nonnegative covariances, the FKG inequality holds for GFF.

Partition $A_n$ into a union of boxes $\mathcal{B}$ where each of
the boxes is of side length at most $\ell$. We choose $\mathcal{B}$ in
a way such that $|\mathcal{B}|$ is minimized. Clearly,
$|\mathcal{B}| \leq (\lceil n/\ell \rceil)^2$. Observing that the
event $\{\sup_{v\in B} \eta_v \leq \E M_n - \lambda\}$ is decreasing for
all $B\in \mathcal{B}$, we apply FKG inequality and
Lemma~\ref{lem-smaller-box}, and conclude that
$$\P(\mbox{$\sup_{v\in A_n}$} \leq \E M_n - \lambda) \geq \prod_{B\in \mathcal{B}} \P(\mbox{$\sup_{v\in B}$} \leq \E M_n - \lambda) \geq (1/2)^{|\mathcal{B}|}\,.$$
Recalling the definition of $\ell$ as in \eqref{eq-def-ell}, this
completes the proof of \eqref{eq-left-lower}.

\subsection*{Acknowledgements}

We thank Tonci Antunovic for a careful reading of an early
manuscript with valuable suggestions on exposition, thank Ofer
Zeitouni for helpful communications, thank Yuval Peres for locating
reference \cite{Dynkin80}, and thank Sourav Chatterjee for helpful
comments on an earlier version of the manuscript. We also warmly thank the anonymous referees
for numerous useful comments, which lead to a significant improvement in exposition.

\small

\end{document}